\theoremstyle{plain}
\newtheorem{thm}{Theorem}[section]
\newtheorem{lem}[thm]{Lemma}
\newtheorem{cor}[thm]{Corollary}
\theoremstyle{definition}
\newtheorem{defn}[thm]{Definition}
\newtheorem{claim}{Claim}
\theoremstyle{remark}
\newtheorem{rem}[thm]{Remark}
\newcommand{\Z}{\mathbb{Z}}% 整數
\newcommand{\Q}{\mathbb{Q}}% 有理數
\newcommand{\F}{\mathbb{F}}% Fields
\newcommand{\bbN}{\mathbb{N}}% 自然數
\newcommand{\idx}{t}
\newcommand{\lrfloor}[1]{\left\lfloor #1 \right\rfloor}
\newcommand{\ang}[1]{\langle #1 \rangle}
\newcommand{\rfrac}[2]{\hbox{\lower+0.3ex\hbox{\scalebox{0.7}[0.67]{$#2$}}\hbox{\kern-0.08em\scalebox{0.97}[1]{$\backslash$}}\lower-0.6ex\hbox{\kern-0.08em \scalebox{0.7}[0.67]{$#1$}}}}% right coset space
\newcommand{\rdp}{\operatornamewithlimits{\hbox{\lower-0.0ex\hbox{\scalebox{1}[1]{$\prod$}}\lower+0.00ex\hbox{\kern-0.95em \scalebox{1}[1]{$\coprod$}}}}}% restricted direct product
\newcommand{\ZZ}{\mathbb{Z}}
\newcommand{\NN}{\mathbb{N}}
\newcommand{\QQ}{\mathbb{Q}}
\newcommand{\FF}{\mathbb{F}}
\newcommand{\GG}{\mathbb{G}}
\newcommand{\oo}{\mathcal{O}}
\newcommand{\xx}{\mathbf{x}}
\newcommand{\yy}{\mathbf{y}}
\newcommand{\ppi}{\boldsymbol{\pi}}
\renewcommand{\tt}{\mathfrak{v}}
\newcommand{\bb}{\mathbf{b}}
\newcommand{\1}{\mathbf{1}}
\newcommand{\GGamma}{\mathbf{\Gamma}}
\newcommand{\rr}{r}
\newcommand{\nfk}{\mathfrak{n}}
\newcommand{\Pfk}{\mathfrak{P}}
\newcommand{\Ocal}{\mathcal{O}}
\newcommand{\End}{\operatorname{End}}
\newcommand{\Gal}{\operatorname{Gal}}
\newcommand{\Frac}{\operatorname{Frac}}
\newcommand{\Tr}{\operatorname{Tr}}
\newcommand{\ord}{\operatorname{ord}}
\newcommand{\ovl}{\overline}
\newcommand{\sbe}{\subseteq}
\let\oldforall\forall
\renewcommand{\forall}{\oldforall \: }
\let\oldexist\exists
\renewcommand{\exists}{\oldexist \: }
\newcommand{\Fq}{\FF_q}
\newcommand{\Fqd}{\FF_{q^d}}
\newcommand{\Fqdl}{\FF_{q^{d\l}}}
\newcommand{\T}{\theta}
\newcommand{\Ami}{A_{+,i}}
\newcommand{\vag}{\Gamma_v^{\textnormal{ari}}}
\newcommand{\vgg}{\Gamma_v^{\textnormal{geo}}}
\newcommand{\vtg}{\Gamma_v^{\textnormal{two}}}
\newcommand{\bggs}{g^{\textnormal{geo}}}
\newcommand{\bags}{g^{\textnormal{ari}}}
\newcommand{\ggs}{G_\l^{\textnormal{geo}}}
\newcommand{\ags}{G_\l^{\textnormal{ari}}}
\let\l\ell
\newcommand{\myToC}{{
		\renewcommand{\contentsname}{}
		\@starttoc{toc}{\contentsname}
}}
\leaders\hbox{\,.\,}\hfil}
\title{Uniqueness of $v$-adic Gamma Functions in the Gross-Koblitz-Thakur Formulas}
\author{Ting-Wei Chang and Hung-Chun Tsui}
\date{\today}
\subjclass[2020]{11R58, 11L05}
\keywords{Function field, Gamma value, Gross-Koblitz-Thakur formula, Gauss sum}
\thanks{The first author is supported by the National Science and Technology Council grant no. 109-2115-M-007-017-MY5 and 113-2628-M-007-003. The second author is supported by the National Science and Technology Council grant no. 113-2811-M-007-052.}
\begin{document}
	
	\begin{abstract}
		In this paper, we determine all continuous non-vanishing functions satisfying Gross-Koblitz-Thakur formulas in positive characteristic.
	\end{abstract}
	
	\maketitle
	
	\setcounter{tocdepth}{3}
	\myToC
	
	\section{Introduction}
	
	\subsection{Motivation}
	
	We recall that the classical Euler's gamma function is given by
	$$
	\Gamma(z) := \int_{0}^\infty t^{z-1}e^{-t}dt,
	\quad
	{\rm{Re}}(z)>0,
	$$
	which has meromorphic continuation to the complex plane and interpolates the factorials as one has $\Gamma(n+1) = n!$ for all natural numbers $n$.
	The special gamma values are referred to the values of $\Gamma$ at proper fractions, and they are interesting because of having connection with periods. For instance, it is known by Lerch and Chowla-Selberg (see \cite{sc1967epstein}) that periods of elliptic curves with complex multiplication over number fields can be expressed as products of special gamma values.
	A folklore conjecture asserts that all these special gamma values are transcendental numbers.
	To date, the transcendence of $\Gamma(z)$ is known only when the denominator of $z \in \QQ \setminus \ZZ$ is either $2,4$ or $6$.
	For example, one knows that $\Gamma(1/2) = \sqrt{\pi}$ is transcendental {over $\QQ$}.
	The latter two cases mentioned above are consequences of Chudnovsky's result \cite{chudnovsky1984contributions} on the algebraic independence of a non-zero period and the associated quasi-period of a CM elliptic curve defined over $\ovl{\QQ}$.
	
	Let $p$ be a prime number.  Morita \cite{morita1975padic} studied the $p$-adic interpolation problem of the factorials, and then successfully defined the $p$-adic gamma function $\Gamma_p$, which is the unique continuous function on the $p$-adic integers $\ZZ_p$ such that
	$$
	\Gamma_p(n)=(-1)^n\prod_{\substack{1 \leq t \leq n-1 \\ p \kern 0.1em\nmid\kern 0.1em t}} t,
	\quad
	\forall
	n\in\NN.
	$$
	The values $\{ \Gamma_{p}(z) : z\in (\QQ\cap \ZZ_{p})\setminus \ZZ \}$ are called special $p$-adic gamma values. Compared with the (conjectural) situation of Euler's gamma, a peculiar phenomenon in the $p$-adic world occurs: A class of special $p$-adic gamma values turns out to be algebraic numbers when $p>2$. This result is a consequence of the celebrated \textit{Gross-Koblitz formula} in \cite{gk1979gauss}, which is reviewed as follows.
	
	We let $p$ be an odd prime and $q := p^\l$ for some $\l \in\NN$.
	Consider the $(q-1)$-st cyclotomic field $K := \QQ(\mu_{q-1})$.
	We choose a prime $\Pfk$ of $K$ above $p$ and let $\FF_\Pfk$ be its residue field (which is a finite field of $q$ elements).
	We let $\chi$ be the group homomorphism from $\FF_\Pfk^\times$ to $\mu_{q-1}$ in $\ovl{\QQ}_p^\times$ which is the inverse of reduction map (the Teichmüller character), and choose an isomorphism $\psi:\ZZ/p\ZZ \to \mu_p$ as abelian groups.
	For $x \in (q-1)^{-1}\ZZ$ with $0\leq x < 1$, we consider the Gauss sum
	$$
	g_\l(x) := -\sum_{z \in \FF_\Pfk^\times} \chi(z^{-x(q-1)}) \psi(\Tr_{\FF_\Pfk/\F_p}(z)).
	$$
	Let $\varpi_p \in \QQ_p(\psi(1))$ be the unique $(p-1)$-st root of $-p$ for which $\varpi_p\equiv \psi(1)-1\pmod{(\psi(1)-1)^2}$.
	
	\begin{thm}[Gross-Koblitz formula]
		For $x \in (q-1)^{-1}\ZZ$ with $0\leq x < 1$, one has
		\begin{equation}\label{GK formula}
			g_\l(x) = \varpi_p^{(p-1)\sum_{j=0}^{\l-1} \ang{p^jx}} \prod_{j=0}^{\l-1} \Gamma_p\left( \ang{p^jx} \right).
		\end{equation}
		Here, $\ang{y} := y-[y]$ is the fractional part of a non-negative real number $y$.
	\end{thm}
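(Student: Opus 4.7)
The natural proof of \eqref{GK formula} is the original argument of Gross-Koblitz \cite{gk1979gauss}, based on Dwork's $p$-adic analytic lifting of the additive character $\psi$. The plan has three stages: (a) lift $\psi$ to a rigid analytic function on a $p$-adic disc; (b) rewrite $g_\l(x)$ as a convergent series over Teichm\"uller representatives and apply orthogonality; (c) identify the resulting Taylor coefficients with values of Morita's $\Gamma_p$.

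For (a), I would set $\pi := \varpi_p$ and introduce Dwork's splitting function, a $p$-adic analytic series $E(T) \in 1 + \pi\,\ZZ_p[\pi]\mbb{T}$ converging on a closed disc of radius strictly greater than $1$ and characterized by $E(1) = \psi(1)$ together with the multiplicative splitting
\[
\psi\bigl(\Tr_{\FF_\Pfk/\FF_p}(\ovl{z})\bigr) = \prod_{j=0}^{\l-1} E(z^{p^j})
\]
for every Teichm\"uller lift $z \in \ZZ_p[\mu_{q-1}]$ of $\ovl{z} \in \FF_\Pfk^\times$. For (b), the Teichm\"uller identification rewrites $\chi(\ovl{z}^{-x(q-1)})$ as $z^{-x(q-1)}$; substituting into the definition of $g_\l(x)$, expanding $E(T) = \sum_{n \geq 0} a_n T^n$, and applying the orthogonality $\sum_z z^m = (q-1)\,\1_{(q-1)\mid m}$ collapses $g_\l(x)$ to
\[
g_\l(x) = -(q-1) \sum_{\substack{(n_0,\ldots,n_{\l-1}) \in \NN^\l \\ \sum_j p^j n_j \equiv x(q-1) \pmod{q-1}}} a_{n_0} \cdots a_{n_{\l-1}}.
\]

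For (c), I would isolate the principal tuple $n_j := \lfloor p\ang{p^j x}\rfloor$, i.e.\ the base-$p$ digits of $x(q-1)$, whose cumulative $\pi$-adic valuation is precisely $(p-1)\sum_j \ang{p^j x}$, matching the prefactor in \eqref{GK formula}. The remaining unit factor is to be identified with $\prod_j \Gamma_p(\ang{p^j x})$ by combining the Artin-Hasse decomposition of $E$ with the defining recursion of $\Gamma_p$ on $\NN$ and Mahler's $p$-adic continuity theorem.

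The main obstacle is precisely this last identification. One must track the power-series coefficients $a_n$ of the Dwork exponential through the base-$p$ digit combinatorics, invoke Stickelberger's theorem to absorb non-principal tuples into strictly higher-order $\pi$-adic corrections, and verify that the normalization $\pi^{p-1} = -p$ with $\pi \equiv \psi(1)-1 \pmod{(\psi(1)-1)^2}$ is exactly what is needed for the signs and the residual $p$-adic units to assemble into Morita gamma values. Everything else — the computation of Teichm\"uller sums, the convergence of the expanded series, and the valuation accounting — is routine once step (c) is in place.
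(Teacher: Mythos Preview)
The paper does not prove this theorem at all: it is stated in the introduction as a classical result, attributed to \cite{gk1979gauss}, and serves only as motivation for the function-field analogs and for Greenberg's uniqueness question. There is therefore no ``paper's own proof'' to compare your proposal against.

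That said, your sketch is a faithful outline of the original Gross--Koblitz/Dwork argument, and the division into (a) Dwork's splitting function, (b) orthogonality over Teichm\"uller lifts, and (c) the coefficient identification with $\Gamma_p$ is the standard route. Your assessment of where the work lies---in step (c), matching the Dwork-exponential coefficients against Morita's gamma via Stickelberger-type valuation estimates and the normalization of $\varpi_p$---is accurate. If you were asked to supply a proof here, this plan would be the right one; but for the purposes of the present paper no proof is needed, only the citation.
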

	
	In particular, the Gross-Koblitz formula together with the translation formula of $\Gamma_p$ (see \cite[p. 256]{morita1975padic}) implies that $\Gamma_p (r/(p-1))$ is algebraic over $\QQ$ for all $r \in\ZZ$.
	
	After the above formula was established, Greenberg asked a question concerning the uniqueness of $p$-adic gamma function in Gross-Koblitz formula:
	Is there any other continuous function $H$ from $\ZZ_p$ to $\QQ_p$ such that $\eqref{GK formula}$ remains true when $\Gamma_p$ is replaced by $H$?
	The answer to this problem was provided by Adolphson in \cite{adolphson1983uniqueness} where a complete classification of such $H$ was given.
	Specifically, we consider the function $\varphi:\Z_p\to\Z_p$ defined by
	\begin{equation}    \label{Ado-phi}
		\varphi\left(\sum_{i=0}^\infty x_ip^i\right)
		:= \sum_{i=0}^\infty x_{i+1}p^i
	\end{equation}
	where $0\leq x_i < p$ for all $i$.
	Then he showed (see \cite[pp. 58--59]{adolphson1983uniqueness})
	
	\begin{thm}[Adolphson]    \label{ado result}
		A function $H: \Z_p\to \Q_p$ is continuous non-vanishing satisfying \eqref{GK formula} if and only if
		$$
		H(x) = \Gamma_p(x)\cdot\frac{G(x)}{G\left(-\varphi(-x)\right)}
		$$
		on $\ZZ_p$, where $G:\Z_p\to\Q_p$ is any continuous non-vanishing function.
	\end{thm}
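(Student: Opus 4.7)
The plan is to reduce each direction to statements about the shift operator $\sigma(x) := -\varphi(-x)$ on $\Z_p$. The key observation is that $\sigma$ is continuous and acts on the dense subset $S := \bigcup_{\ell \geq 1}(p^\ell-1)^{-1}\Z \cap [0,1) \subset \Z_p$ as a left cyclic shift on periodic digit blocks: if $x = r/(p^\ell-1)$ with $r = \sum_{i=0}^{\ell-1} r_i p^i$, then $-x$ has the purely periodic $p$-adic expansion with block $(r_0,\ldots,r_{\ell-1})$, and one computes that $\sigma(x)$ (resp.\ $\langle p x\rangle$) corresponds to the cyclic shift of this block to the left (resp.\ right). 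Hence $\sigma$ and $\rho := \langle p \cdot\rangle$ are mutually inverse permutations of $S$, and they partition $S$ into finite orbits.

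For the ``if'' direction, let $H = \Gamma_p \cdot G/(G \circ \sigma)$ with $G$ continuous non-vanishing. Since \eqref{GK formula} holds for $\Gamma_p$, it remains to show $\prod_{j=0}^{\ell-1} G(\rho^j x)/G(\sigma \rho^j x) = 1$ for $x \in S_\ell := (p^\ell-1)^{-1}\Z \cap [0,1)$. Using $\sigma = \rho^{-1}$ on $S$ and $\rho^\ell x = x$ on $S_\ell$,
\[
\prod_{j=0}^{\ell-1} \frac{G(\rho^j x)}{G(\sigma \rho^j x)} = \prod_{j=0}^{\ell-1} \frac{G(\rho^j x)}{G(\rho^{j-1} x)} = \frac{G(\rho^{\ell-1} x)}{G(\rho^{-1} x)} = 1.
\]

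For the ``only if'' direction, put $F := H/\Gamma_p$; this is continuous and non-vanishing on $\Z_p$. Dividing \eqref{GK formula} for $H$ by that for $\Gamma_p$ gives $\prod_{j=0}^{\ell-1} F(\rho^j x) = 1$ for all $\ell$ and $x \in S_\ell$, which, specializing $\ell$ to the size of the $\sigma$-orbit of $x$, becomes $\prod_{y \in \mathcal{O}} F(y) = 1$ on every finite $\sigma$-orbit $\mathcal{O} \subset S$. One then defines $G: S \to \Q_p^\times$ orbit-by-orbit: pick a representative $y_\mathcal{O}$ in each orbit, assign a nonzero value $G(y_\mathcal{O})$ freely, and propagate via $G(\sigma y) := G(y)/F(y)$. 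The orbit-product condition makes this well-defined and yields $F = G/(G \circ \sigma)$ on $S$.

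The main obstacle is extending this $G$ continuously and non-vanishingly to all of $\Z_p$. Because $S$ is dense in $\Z_p$ and $\Q_p$ is complete, the extension exists uniquely (and is then nowhere vanishing) as soon as $G|_S$ is uniformly $p$-adically continuous and bounded below. We therefore choose the orbit representatives $y_\mathcal{O}$ compatibly — for instance, take $y_\mathcal{O}$ to be the element of $\mathcal{O}$ with lexicographically minimal digit block and set $G(y_\mathcal{O}) := 1$ — and bound $|G(y) - G(y')|_p$ for $p$-adically close $y, y' \in S$ by a telescoping estimate along their common initial $p$-adic digits, exploiting the uniform continuity of $F$ on compact $\Z_p$. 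Once $G$ has been extended continuously, the relation $F = G/(G \circ \sigma)$ propagates from $S$ to all of $\Z_p$ by continuity of $F$, $G$, $\sigma$ together with density of $S$, completing the proof.
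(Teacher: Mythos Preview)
Your ``if'' direction is correct and matches the paper's argument: the telescoping over a full $\sigma$-orbit (equivalently, a full $\rho$-orbit) kills the $G$-contribution.

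The ``only if'' direction, however, has a genuine gap. You correctly reduce to finding a continuous non-vanishing $G$ with $F = G/(G\circ\sigma)$, and your orbit-by-orbit definition of $G$ on $S$ is well-defined. But the sentence ``bound $|G(y)-G(y')|_p$ \ldots\ by a telescoping estimate along their common initial $p$-adic digits'' is where the real difficulty lies, and you have not carried it out. The problem is structural: if $y,y'\in S$ agree in their first $N$ $p$-adic digits, the lexicographically minimal representatives $y_{\mathcal O},y'_{\mathcal O'}$ of their (possibly very different) orbits need not be $p$-adically close, nor reached from $y,y'$ by the same number of $\sigma$-steps. Writing $G(y)=\prod_{i=0}^{k-1}F(\sigma^i y_{\mathcal O})^{-1}$ and similarly for $y'$, you are comparing two products of \emph{different} lengths over \emph{different} base points; uniform continuity of $F$ controls individual factors only when the arguments are close, and there is no mechanism here forcing that. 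A telescoping comparison forward from $y,y'$ via $G(y)=G(\sigma^k y)\prod_{i<k}F(\sigma^i y)$ runs into the same circularity: it requires knowing $G(\sigma^k y)\approx G(\sigma^k y')$ for some $k$, which is what you are trying to prove.

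The paper (following Adolphson) avoids this entirely by \emph{not} defining $G$ on $S$ first. Instead it builds a sequence of continuous functions $G_n$ on all of $\Z_p$ at once, using an explicit padding trick: one forms $\beta_n(x)$ whose $p$-adic expansion is a periodic block consisting of $n-1$ copies of a fixed digit $b$ followed by the first $n-1$ digits of $x$, sets $G_n(x):=\prod_{i=0}^{n-2}F(\varphi^{(i)}(\beta_n(x)))^{-1}$, and proves that $G_n$ converges uniformly. The padding by $b$ anchors every $\beta_n(x)$ near the fixed point $b/(1-p)$ of $\varphi$, where $F=1$; this is exactly what makes both the uniform bound on $\ord G_n$ and the uniform Cauchy estimate for $G_n/G_{n+1}$ go through. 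That idea --- or something playing the same role --- is missing from your argument.
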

	
	Adolphson's result motivates us to study the positive characteristic counterparts in the function field setting.
	
	\subsection{\texorpdfstring{$v$}{v}-adic Gamma Functions}
	
	We now turn our focus to the function field setting.
	Let $p$ be any prime and $q$ be a power of $p$.
	Let $A:=\Fq[\T]$ be the polynomial ring in the variable $\T$ over a finite field $\Fq$ of $q$ elements,  and $k := \Fq(\T)$ be its field of fractions.
	Let $A_+$ be the set of all monic polynomials in $A$ and $A_{+,i}$ (for $i\geq0$) be its subset consisting of all monic polynomials of degree $i$.
	We fix an irreducible $v\in A_{+,d}$ and let $k_v$ be the completion of $k$ at $v$ with ring of integers $A_v$.
	We let $\ovl{k}$ be the algebraic closure of $k$ contained in a fixed algebraic closure $\ovl{k}_v$ of $k_v$.
	For an element $x\in A_v$, put
	$$
	x^\flat := 
	\begin{cases}
		x, & \text{if } x \in A_v^\times, \\
		1, & \text{if } x\in v A_v.
	\end{cases}
	$$
	
	\begin{defn}    \label{v-adic-gammas-definition}
		(1) (\textit{$v$-adic arithmetic gamma function}, \cite[Appendix]{goss1980modular}): For $y = \sum_{i=0}^{\infty} y_iq^i \in \ZZ_p$ with $0\leq y_i < q$ for all $i$, define $\vag: \ZZ_p \to A_v^\times$ by
		$$
		\vag (y+1) := \prod_{i=0}^{\infty} \left( -\prod_{a\in \Ami} a^\flat \right)^{y_i}.
		$$
		(2) (\textit{$v$-adic geometric gamma function}, \cite[Section 5]{thakur1991gamma}): Define $\vgg: A_v \to A_v^\times$ by
		$$
		\vgg(x) := \frac{1}{x^\flat} \prod_{i=0}^{\infty} \left( \prod_{a\in \Ami} \frac{a^\flat}{(x+a)^\flat} \right).
		$$
		(3) (\textit{$v$-adic two-variable gamma function}, \cite[Subsection 9.9]{goss1996basic}): For $(x,y) \in A_v \times \ZZ_p$ with $y$ as in the arithmetic case, define $\vtg: A_v \times \ZZ_p \to A_v^\times$ by
		$$
		\vtg(x,y+1) := \frac{1}{x^\flat} \prod_{i=0}^\infty \left( \prod_{a\in\Ami} \frac{a^\flat}{(x+a)^\flat} \right)^{y_i}.
		$$
	\end{defn}
	
	The arithmetic case is the Morita-style $v$-adic interpolation of Carlitz factorials \cite{carlitz1935oncertain} $\Gamma^{\textnormal{ari}}: \NN \to A$ given by
	$$
	\Gamma^{\textnormal{ari}} \left(1 + \sum_{i=0}^{n} y_iq^i \right)
	:= \prod_{i=0}^n \left( \prod_{a\in \Ami} a \right)^{y_i}
	\quad
	\text{where}
	\quad
	0 \leq y_i < q.
	$$
	The above can be viewed as an analog of $\Gamma(y+1) = y!$ for $y \in \NN$ in view of W. Sinnott's result that (see \cite[Theorem 9.1.1]{goss1996basic})
	$$
	\ord_f (\Gamma^{\textnormal{ari}} (y+1))
	= \sum_{e \geq 1} \lrfloor{\frac{y}{q^{e\deg f}}} 
	$$
	for any monic prime $f$.
	On the other hand, the geometric case arises from the Morita-style $v$-adic interpolation of the function $\Gamma^{\textnormal{geo}}: A \to k \cup \{\infty\}$ given by
	$$
	\Gamma^{\textnormal{geo}}(x)
	:= \frac{1}{x} \prod_{a\in A_+} \frac{a}{x+a}
	= x^{-1}  \prod_{a\in A_+} \left( 1+\frac{x}{a} \right)^{-1}.
	$$
	Note that $\Gamma^{\textnormal{geo}}$ has simple poles at $-A_+ \cup \{0\} := \{-a \mid a\in A_+\} \cup \{0\}$, analogous to the case that Euler's $\Gamma$-function has simple poles at $-\NN \cup \{0\} := \{-n \mid n\in \NN\} \cup \{0\}$.
	Finally, for the two-variable case, one sees that
	$$
	\vtg\left(x,1-\frac{1}{q-1}\right)
	= \vgg(x).
	$$
	Note that all three gamma functions in Definition \ref{v-adic-gammas-definition} are continuous and non-vanishing as they are defined via the interpolations of continuous functions and take values in the units of $A_v$.
	
	\subsection{Gross-Koblitz-Thakur Formulas for \texorpdfstring{$v$}{v}-adic Gamma Functions}
	
	There are ana\-logs of Gross-Koblitz formulas for these $v$-adic gamma functions by the work of Thakur \cite{thakur1988gauss} and the first author \cite{chang2025geometric}.
	Before stating the precise formulas we prepare some necessary background (see \cite[Chapter 3]{goss1996basic} for more details).
	Let $K$ be a field together with an $\Fq$-algebra homomorphism $\iota:A \to K$.
	Let $\tau \in \End_{\Fq}(\GG_a)$ be the $q$-th power Frobenius morphism on the additive group $\GG_a$ over $K$.
	We let $K\{\tau\}$ be the twisted polynomial ring in $\tau$ over $K$ with usual addition but the multiplication is given by $\tau x = x^q \tau$ for all $x \in K$.
	Recall that the Carlitz module over $K$ is the $\Fq$-algebra homomorphism $C: A \to K\{\tau\}$ defined by $C_\T := \iota(\T) + \tau$.
	It induces a new $A$-module structure $C(K)$ on $K$ given by $a \cdot x := C_a(x)$ for all $a \in A$ and $x \in K$.
	In the case $K = \ovl{k}$ (with the natural structure map), we let $\Lambda_a := C(\ovl{k})[a]$ be the $a$-torsion points of $C(\ovl{k})$ for any $a \in A$.
	Then it is a fact that $\Lambda_a$ is isomorphic to $A/a$ as $A$-modules.
	
	To state the analogs of Gross-Koblitz formulas for three $v$-adic gamma functions, we define the notions of fractional part on the rational number field $\QQ$ and function field $k$.
	Note for $y \in \QQ$, its fractional part $\ang{y}$ can be defined as the unique element in $\QQ$ such that $0 \leq \ang{y} <1$ and $y \equiv \ang{y} \pmod{\ZZ}$.
	On the other hand, one recalls the normalized $\infty$-adic absolute value on $k$ is given by $|0|_\infty := 0$ and $|f/g|_\infty := q^{\deg f - \deg g}$ for any $f,g \in A \setminus\{0\}$.
	(In what follows, we will omit the subscript $|\cdot|_\infty$ and denote it as $|\cdot|$ for convenience.)
	Thus for $x \in k$, we define its fractional part $\ang{x}$ as the unique element in $k$ such that $0 \leq |\ang{x}| < 1$ and $x \equiv \ang{x} \pmod{A}$.
	
	First, we consider the arithmetic case \cite{thakur1988gauss}.
	Let $\chi: A/v \to \Fqd \sbe \ovl{k}$ be the Teichmüller character, and choose an $A$-module isomorphism $\psi: A/v \to \Lambda_v$.
	Then Thakur defined the \textit{arithmetic Gauss sum} to be
	$$
	\bags
	:= -\sum_{z\in (A/v)^\times} \chi(z^{-1}) \psi(z) \in k(\Lambda_v)\Fqd.
	$$
	(The same element is denoted as $g_0$ in the original paper.)
	More generally, take any natural number $\l$.
	For $y \in (q^{d\l}-1)^{-1}\ZZ$ with $0 \leq y < 1$, we write
	$$
	y = \sum_{s=0}^{d\l-1} \frac{y_s q^s}{q^{d\l}-1}
	\quad
	(0 \leq y_s < q \text{ for all } s).
	$$
	We let $\tau_q \in \Gal(k\Fqd/k)$ be the $q$-th power Frobenius map on the constant field, and extend it canonically to $\Gal(k(\Lambda_v)\Fqd/k) \simeq \Gal(k(\Lambda_v)/k) \times \Gal(k\Fqd/k)$.
	Then we consider the integral group ring element $\sum_{s=0}^{d\l-1} y_s \tau_q^s \in \ZZ[\Gal(k(\Lambda_v)\Fqd/k)]$ acting on the arithmetic Gauss sum $\bags$, and define
	$$
	\ags(y) := (-1)^{\l(d-1)} \prod_{s=0}^{d\l-1} (\bags)^{y_s \tau_q^s}.
	$$
	Let $\varpi_v \in k_v(\psi(1))$ be the unique $(q^d-1)$-st root of $-v$ for which $\varpi_v \equiv -\psi(1) \pmod{\psi(1)^2}$.
	Then under this setting, Thakur proved the following analog of Gross-Koblitz formula for $v$-adic arithmetic gamma function (see \cite[Theorem VI]{thakur1988gauss} and \cite[Theorem 4.8]{thakur1991gamma}).
	
	\begin{thm}[Gross-Koblitz-Thakur formula, arithmetic case]
		For $y \in (q^{d\l} -1)^{-1}\ZZ$ with $0 \leq y < 1$, one has
		\begin{equation} \label{GKT formula ari}
			G_\l^{\textnormal{ari}}(y)
			= \varpi_v^{(q^d-1)\sum_{j=0}^{\l-1} \ang{q^{dj}y}} \prod_{j=0}^{\l-1} \vag\left(\ang{q^{dj}y}\right). 
		\end{equation}
	\end{thm}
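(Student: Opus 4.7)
The plan is to follow Thakur's approach, which transplants the Dwork--Gross--Koblitz strategy to the Carlitz setting. The key structural input is the ramification of the Carlitz cyclotomic extension: $k_v(\Lambda_v)/k_v$ is totally ramified of degree $q^d - 1$, $\psi(1)$ is a uniformizer, and $\varpi_v$ is a normalized $(q^d-1)$-st root of $-v$. With this in hand, the identity \eqref{GKT formula ari} splits cleanly into a \emph{valuation part} (the $\varpi_v$-power) and a \emph{unit part} (the product of $\vag$-values), which I would handle separately.

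Both sides of \eqref{GKT formula ari} are built from digit-level data: the left-hand side is the product $\prod_s(\bags)^{y_s\tau_q^s}$, and the right-hand side decomposes via the defining interpolation of $\vag$ into a product indexed by the base-$q$ digits of $\ang{q^{dj}y}(q^{d\ell}-1)$. Using the Galois decomposition $\Gal(k(\Lambda_v)\Fqd/k) \simeq \Gal(k(\Lambda_v)/k) \times \Gal(k\Fqd/k)$, the Frobenius twists $\tau_q^s$ act transparently on $\bags$ (by Frobenius on the Teichmüller character side) and on $\varpi_v$, and a short reorganization matches the digit-indexed factors on both sides.

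For the valuation contribution, I would start from the twisted Gauss sums $-\sum_z \chi(z^{-n})\psi(z)$ for $n \in \{0, 1, \ldots, q^d-2\}$. Expanding $\psi$ in powers of the uniformizer $\psi(1)$ via the Carlitz exponential and isolating the leading term of this character sum is a Stickelberger-type computation that pins down its $\varpi_v$-valuation. Reassembling these local computations through the Galois twists $\tau_q^s$ and the normalization $\varpi_v \equiv -\psi(1)$ modulo $\psi(1)^2$ recovers the exponent $(q^d-1)\sum_j\ang{q^{dj}y}$ appearing in \eqref{GKT formula ari}.

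The main obstacle is the unit part: matching the resulting unit in $A_v^\times$ with $\prod_j\vag(\ang{q^{dj}y})$. I would partition $(A/v)^\times$ by the degree of the monic lift of each residue class so that the Gauss sum decomposes into blocks indexed by $i \in \{0, \ldots, d-1\}$, and then use a Carlitz analog of Dwork's congruence to identify the degree-$i$ block with $-\prod_{a \in A_{+,i}} a^\flat$, which is exactly the factor appearing in the definition of $\vag$ attached to the $i$-th digit. Lifting from congruences modulo powers of $v$ to equalities in $A_v^\times$ exploits the Teichmüller copy of $\mu_{q^d-1}$ inside $A_v^\times$. The careful digit-by-digit bookkeeping, together with the Carlitz analog of Dwork's congruence, is where the bulk of the technical work sits.
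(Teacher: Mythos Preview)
The paper does not give its own proof of this theorem: it is stated as background and attributed directly to Thakur, with citations to \cite[Theorem VI]{thakur1988gauss} and \cite[Theorem 4.8]{thakur1991gamma}. So there is no in-paper argument to compare your proposal against.

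That said, your outline is a reasonable high-level sketch of Thakur's original strategy --- separating the valuation (Stickelberger-type) part from the unit part, tracking base-$q$ digits under the Frobenius twists $\tau_q^s$, and matching the unit contribution against the Carlitz factorial blocks $-\prod_{a\in A_{+,i}}a^\flat$ that define $\vag$. If you actually intend to write out a proof rather than cite one, the place where your sketch is vaguest is the ``Carlitz analog of Dwork's congruence'': in Thakur's argument this is the substantive step, and you would need to state and prove the precise congruence linking the degree-$i$ block of the Gauss sum to the Carlitz factorial factor, not just invoke it by name. For the purposes of this paper, however, a citation suffices.
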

	
	Next, we consider the geometric and two-variable cases \cite{chang2025geometric}.
	Fix a positive integer $\l$ and let $\nfk := v^\l$.
	Consider the Carlitz $(\nfk-1)$-st cyclotomic function field $K := k(\Lambda_{\nfk-1})$.
	We let $\Pfk$ be the prime in $K$ above $v$ corresponding to the inclusions $K \sbe \ovl{k} \sbe \ovl{k}_v$, and let $\FF_\Pfk$ be the residue field (which is a finite field of $q^{d\l}$ elements).
	Then via the natural structure map on $\FF_\Pfk$, it obtains a new $A$-module structure $C(\FF_\Pfk)$ via the Carlitz module.
	We let $\omega: C(\FF_\Pfk) \to \Lambda_{\nfk-1}$ be the $A$-module isomorphism which is the inverse of reduction map, and let $\psi: \FF_\Pfk \to \Fqdl \sbe \ovl{k}$ be the usual Teichmüller embedding.
	Then for $x \in (\nfk-1)^{-1}A$ with $|x| < 1$, we define the \textit{geometric Gauss sum} to be
	$$
	\bggs_x
	:= 1 + \sum_{z \in \FF_\Pfk^\times} \omega\left(C_{x(\nfk-1)}(z^{-1})\right)\psi(z) \in K\Fqdl.
	$$
	More generally, we let $\tau_q \in \Gal(k\Fqdl/k)$ be the $q$-th power Frobenius map on the constant field, and extend it canonically to $\Gal(K\Fqdl/k) \simeq \Gal(K/k) \times \Gal(k\Fqdl/k)$.
	For $y \in (q^{d\l}-1)^{-1}\ZZ$ with $0 \leq y < 1$ written as in the arithmetic case, we consider the integral group ring element $\sum_{s=0}^{d\l-1} y_s \tau_q^s \in \ZZ[\Gal(K\Fqdl/k)]$ acting on the geometric Gauss sum $\bggs_x$, and define
	$$
	\ggs(x,y) := \prod_{s=0}^{d\l-1} (\bggs_x)^{y_s \tau_q^s}.
	$$
	In particular, put
	$$
	\ggs(x)
	:= \ggs \left(x , \frac{1}{q-1} \right)
	= \prod_{s=0}^{d\l-1} (\bggs_x)^{\tau_q^s}.
	$$
	Then under this setting, we have the following analogs of Gross-Koblitz-Thakur formulas for $v$-adic geometric and two-variable gamma functions \cite[Theorems 5.1.3 and 5.1.4]{chang2025geometric}.
	
	\begin{thm}[Gross-Koblitz-Thakur formulas, geometric and two-variable cases] 
		Suppose $x \in (\nfk-1)^{-1} A$ and $y \in (q^{d\l}-1)^{-1} \ZZ$ with $|x|<1$ and $0 \leq y <1$, write
		$$
		x = \sum_{j=0}^{\l-1} \frac{x_j v^j}{v^\l-1}
		\quad
		(\deg x_j < d \text{ for all } j),
		\quad
		y = \sum_{s=0}^{d\l-1} \frac{y_s q^s}{q^{d\l}-1}
		\quad
		(0 \leq y_s < q \text{ for all } s).
		$$
		Then we have
		\begin{equation}   \label{GKT formula geo}
			\ggs(x)
			= \prod_{j=0}^{\l-1} \frac{\delta_{x,j}}{\ang{v^jx}^\flat}
			\cdot
			\prod_{j=0}^{\l-1} \vgg \left( \ang{v^jx} \right)^{-1}
		\end{equation}
		and
		\begin{equation}    \label{GKT formula two}
			\ggs (x,y)
			= \left( \prod_{j=0}^{\l-1}
			\frac{\ang{v^jx}^\flat}{\delta_{x,j}^{q-1-y_{dj+\deg x_j}}} \right)
			\ggs(x)^{q-1}
			\prod_{j=0}^{\l-1} \vtg \left( \ang{v^jx}, \ang{q^{dj}y} \right)
		\end{equation}
		where $\delta_{x,j} := v\ang{v^{\l-j-1}x}$ if $x_j \in A_+$ and $1$ otherwise.
	\end{thm}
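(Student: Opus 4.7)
The plan is to establish the geometric formula \eqref{GKT formula geo} first and then to derive the two-variable formula \eqref{GKT formula two} from it. The reduction is natural because $\vtg(x,y+1)$ is the ``$y$-weighted'' analog of $\vgg(x)$: inserting the $q$-adic digits $y_i$ as exponents in the product definition of $\vgg$ produces $\vtg$, and on the Gauss-sum side the integral group ring element $\sum_s y_s \tau_q^s$ performs the matching weighted Frobenius twist of $\bggs_x$. Once \eqref{GKT formula geo} is in hand, I would write the weight vector $(y_s)$ as $(q-1,\ldots,q-1) + (y_s - (q-1))$; the uniform part yields the factor $\ggs(x)^{q-1}$ and the $\vgg$-content of each $\vtg$-factor, and the residual digits produce the explicit rational correction $\prod_j \ang{v^jx}^\flat/\delta_{x,j}^{q-1-y_{dj+\deg x_j}}$.

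For the geometric formula itself, the strategy has two layers. First, a Stickelberger-type factorization of $\bggs_x$ in the extension $K\Fqdl/K$ fixes its $\Pfk$-adic valuation and isolates the ``norm part'' responsible for the rational factor $\prod_j \delta_{x,j}/\ang{v^jx}^\flat$. Second, the ``unit part'' must be matched with $\prod_j \vgg(\ang{v^jx})^{-1}$ via a Dwork--Honda-style $v$-adic analysis: one expands $\bggs_x$ as a convergent series in a suitable uniformizer (the natural candidate is $\varpi_v$, or a generator of $\Lambda_v$), using the Carlitz exponential as the function-field analog of Dwork's splitting function, and identifies this expansion, term by term, with the infinite product defining $\vgg$. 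The formation of $\ggs(x) = \prod_s (\bggs_x)^{\tau_q^s}$ then kills the dependence on $\psi$ and collapses the summation index of $\bggs_x$ into $\l$ cyclic orbits, each orbit producing one factor $\vgg(\ang{v^jx})^{-1}$ for $j = 0,\ldots,\l-1$.

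The main obstacle is the bookkeeping for the correction factor $\delta_{x,j} = v\ang{v^{\l-j-1}x}$ (when $x_j$ is monic): a monic coefficient $x_j$ forces a ``carry'' in the cyclic shift $x \mapsto v^j x$ modulo $A$, so that $\ang{v^{\l-j-1}x}$ absorbs an extra factor of $v$ that must be pulled out explicitly when comparing the $v$-adic expansion of $\bggs_x$ with the gamma-product. I expect to handle this by induction on $\l$, with a separate case analysis on the subset $\{j : x_j \in A_+\}$; keeping the fractional parts $\ang{v^jx}$ coherently aligned and tracking the extra powers of $v$ contributed by each monic digit is the most delicate step, and the exponent $q-1-y_{dj+\deg x_j}$ in the two-variable formula is really a shadow of exactly this carry phenomenon once the weighting $(y_s)$ is added.
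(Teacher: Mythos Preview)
The paper does not contain a proof of this theorem: it is quoted as background from \cite[Theorems 5.1.3 and 5.1.4]{chang2025geometric} and then used as input for the main results on uniqueness. There is therefore no in-paper argument to compare your proposal against.

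What you have written is not a proof but a high-level strategy outline, and even as a plan it remains programmatic: the two technical pillars you invoke (a Stickelberger-type factorization of $\bggs_x$ and a Dwork--Honda style matching of the ``unit part'' with the $\vgg$-product) are each substantial theorems in their own right, and you have not indicated how either would actually be carried out in this setting. In particular, the assertion that ``the formation of $\ggs(x)=\prod_s(\bggs_x)^{\tau_q^s}$ kills the dependence on $\psi$ and collapses the summation into $\l$ cyclic orbits, each producing one factor $\vgg(\ang{v^jx})^{-1}$'' is exactly the content of the theorem, not an argument for it. If you intend to supply a self-contained proof, you would need to consult the cited reference and reproduce (or replace) the actual analytic computation there; the present paper simply takes these formulas as established.
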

	
	\begin{rem}
		Note that the translation formulas of these $v$-adic gamma functions imply that
		$$
		\frac{\vag(y+r)}{\vag(y)}, 
		\frac{\vgg(x+a)}{\vgg(x)},
		\frac{\vtg(x+a,y+r)}{\vtg(x,y)}
		\in k^\times,
		\quad
		\forall r \in \ZZ, a \in A.
		$$
		Hence, similar to the classical case, the Gross-Koblitz-Thakur formulas \eqref{GKT formula ari},\eqref{GKT formula geo},\eqref{GKT formula two} imply that $\vag(r/(q^d-1)), \vgg(a/(v-1)), \vtg(a/(v-1),r/(q^d-1))$ are algebraic over $k$ for all $r\in \ZZ$ and $a \in A$.
		We further mention that in the paper~\cite{cwy2024vadic}, Chang-Wei-Yu proved the transcendence of those special $v$-adic arithmetic gamma values which are not from the Gross-Koblitz-Thakur formula.
		That is, for $a/b \in \QQ^{\times}\cap \ZZ_{p}$ with $\gcd(a,b)=1$, we have $\vag (a/b)$ is transcendental over $k$ if and only if $b \nmid q^{d}-1$.
	\end{rem}
	
	\subsection{Main Theorem}
	
	In this paper, we consider the analogous question of Greenberg for these three $v$-adic gamma functions:
	For each case, is there any other non-vanishing continuous $k_v$-valued function $H$ defined on the same domain as the respective gamma function, such that the corresponding Gross-Koblitz-Thakur formula (\eqref{GKT formula ari},\eqref{GKT formula geo},\eqref{GKT formula two}) remains true when the gamma is replaced by $H$?
	For this problem, we give a precise description of all such functions parallel to the classical result.
	
	To treat these three cases at once, we consider a more general setting.
	Let $\Ocal$ be a direct product of finitely many rings $\Ocal_\idx$ (indexed by $\idx$), where each $\Ocal_\idx$ is either $\ZZ$ or $A$.
	We let $\ovl{\Ocal}_\idx$ be the completion $\ZZ_{p_\idx}$ or $A_{v_\idx}$ (depending on the choice of $\Ocal_\idx$) at a fixed finite prime $p_\idx$ or $v_\idx$, and let $\ovl{\Ocal}$ be the direct product of $\ovl{\Ocal}_\idx$.
	
	For each $\idx$, we fix a power of the given uniformizer $\pi_\idx \in \ovl{\Ocal}_\idx$ and put $\ppi := (\pi_\idx)_\idx \in \ovl{\Ocal}$.
	Let $\1 := (1,\ldots,1) \in\ovl{\Ocal}$ be the identity element.
	For $\xx = (\xx_\idx)_\idx \in \ovl{\Ocal}$, we write
	$$
	\xx_\idx
	:= \sum_{i=0}^\infty x_{\idx,i}\pi_\idx^i \in \ovl{\Ocal}_\idx
	$$
	in $\pi_\idx$-adic expansion.
	This means that $x_{\idx,i}$ is an integer with $0 \leq x_{\idx,i} < \pi_\idx$ when $\Ocal_\idx = \ZZ$ or a polynomial in $A$ with $|x_{\idx,i}| < |\pi_\idx|$ (i.e., $\deg x_{\idx,i} < \deg \pi_\idx$) when $\Ocal_\idx = A$.
	Define $\varphi : \ovl{\Ocal} \to \ovl{\Ocal}$ by $\varphi(\xx) = (\varphi_\idx(\xx_\idx))_\idx$
	where each $\varphi_\idx: \ovl{\Ocal}_\idx \to \ovl{\Ocal}_\idx$ is given by (cf. \eqref{Ado-phi})
	$$
	\varphi_\idx(\xx_\idx)
	:= \sum_{i=0}^\infty x_{\idx,i+1}\pi_\idx^i.
	$$
	We also define the fractional part of elements in the product of fraction fields of $\oo_t$, denoted as $\prod \Frac(\Ocal_\idx)$, by taking the fractional parts componentwise.
	That is, define $\ang{\xx} := (\ang{\xx_\idx})_\idx$ for any $\xx \in \prod \Frac(\Ocal_\idx)$.
	
	Our main result in this paper is to study and answer analogous problems of Greenberg in the setting as general as possible.
	For any $n \in \NN$ and $\xx = (\xx_\idx)_\idx \in (\ppi^n - \1)^{-1} \Ocal$, we set up the condition:
	\begin{equation}    \label{con}
		\begin{cases}
			0 \leq \xx_t < 1,  &\text{if } \Ocal_\idx = \ZZ, \\
			|\xx_t| < 1,  &\text{if } \Ocal_\idx = A.
		\end{cases}
	\end{equation}
	The following  is our main theorem, which  recovers Theorem \ref{ado result}.
	\begin{thm}   \label{main result}
		Given any complete non-Archimedean field $K$, let $\GGamma: \ovl{\Ocal} \to K$ be a continuous non-vanishing function.
		Then $H: \ovl{\Ocal} \to K$ is a continuous non-vanishing function satisfying for all $n\in\NN$ and $\xx \in (\ppi^n-\1)^{-1} \Ocal$ with condition \eqref{con},
		\begin{equation}   \label{condition-in-main-result}
			\prod_{j=0}^{n-1} \GGamma\left(\ang{\ppi^j \xx}\right)
			= \prod_{j=0}^{n-1} H\left(\ang{\ppi^j \xx}\right)
		\end{equation}
		if and only if
		$$
		H(\xx) = \GGamma(\xx) \cdot \frac{G(\xx)}{G\left(-\varphi(-\xx)\right)}
		$$
		on $\ovl{\Ocal}$, where $G: \ovl{\Ocal} \to K$ is any continuous non-vanishing function.
	\end{thm}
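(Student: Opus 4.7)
I first establish the dictionary between the dynamical map $\sigma(\xx) := -\varphi(-\xx)$ and the arithmetic shift $\xx \mapsto \langle \ppi\xx \rangle$ on the dense set $S := \bigcup_n S_n \subset \ovl{\Ocal}$, where $S_n := \{\xx \in (\ppi^n - \1)^{-1}\Ocal : \eqref{con}\}$. Writing $-\xx = \sum_{i \ge 0} y_i\ppi^i$ in $\ppi$-adic digits, $\varphi$ is the digit shift, so $-\sigma(\xx) = \sum_{i \ge 0} y_{i+1}\ppi^i$. For $\xx \in S_n$ the digit sequence is periodic of period dividing $n$, and an explicit cyclic-shift calculation shows that $\sigma$ is the inverse of the permutation $\xx \mapsto \langle \ppi\xx\rangle$ on each finite orbit; equivalently, $\sigma(\langle\ppi\xx\rangle) = \xx$ for every $\xx \in S$.

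Given this dictionary, the ``if'' direction follows by telescoping: if $H = \GGamma \cdot G/(G \circ \sigma)$ and $\xx \in S_n$, then $\prod_{j=0}^{n-1}(H/\GGamma)(\langle\ppi^j\xx\rangle) = \prod_{j} G(\langle\ppi^j\xx\rangle)/G(\langle\ppi^{j-1}\xx\rangle)$ collapses to $1$ around the $n$-cycle, yielding \eqref{condition-in-main-result}.

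For the ``only if'' direction, I set $R := H/\GGamma$, a continuous non-vanishing function satisfying $\prod_{j=0}^{n-1} R(\sigma^j(\xx)) = 1$ on every finite $\sigma$-orbit in $S$; in particular $R(\mathbf{0}) = 1$ from the fixed-point orbit. Using the continuous contracting section $\Phi(\yy) := \ppi\yy$ of $\sigma$ together with the convergences $\ppi^k\yy \to \mathbf{0}$ and $R(\ppi^k\yy) \to 1$, the infinite product
$$G(\yy) := \prod_{k=1}^{\infty} R(\ppi^k\yy)^{-1}$$
converges uniformly in $K$ and defines a continuous non-vanishing function $G: \ovl{\Ocal} \to K$; a direct telescoping verifies $G/(G\circ\sigma) = R$ on the closed subset $\ppi\ovl{\Ocal}$.

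The main obstacle is to extend this cocycle identity to the complementary cosets of $\ppi\ovl{\Ocal}$, where the explicit formula does not automatically give the correct value. My strategy is a Liv\v{s}ic-type argument: $\sigma$ is expanding (locally Lipschitz with factor $|\ppi|^{-1}$) and topologically transitive (conjugate to the one-sided shift on digit sequences), while the hypothesis is exactly the cocycle condition on periodic orbits; this combination forces $R$ to be a continuous coboundary $R = G_0/(G_0 \circ \sigma)$ for some continuous $G_0$ normalized by $G_0(\mathbf{0}) = 1$, and a telescoping computation then shows that the infinite product above recovers $G_0$, so $G/(G\circ\sigma) = R$ holds on all of $\ovl{\Ocal}$. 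Carrying out the Liv\v{s}ic construction rigorously in the non-Archimedean setting---tracking uniform continuity of the propagation across $\sigma$-orbits of $S$ and gluing the base-point normalizations on distinct orbits---is the main technical burden of the proof.
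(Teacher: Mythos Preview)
Your dictionary between $\sigma := -\varphi(-\cdot)$ and $\xx \mapsto \langle\ppi\xx\rangle$, and the telescoping for the ``if'' direction, are correct and match the paper's Section~2.1. The ``only if'' direction, however, has a genuine gap. Your candidate $G(\yy) = \prod_{k\ge 1} R(\ppi^k\yy)^{-1}$ is well-defined and continuous, and your check that $R = G/(G\circ\sigma)$ holds on $\ppi\ovl{\Ocal}$ is fine; but off $\ppi\ovl{\Ocal}$ you prove nothing. Instead you invoke an unproved ``Liv\v{s}ic-type'' theorem to manufacture an abstract coboundary $G_0$, then observe that your infinite product recovers $G_0$ up to a constant. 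This is circular: the existence of such a $G_0$ is exactly the assertion to be established, and you yourself flag the Liv\v{s}ic construction as ``the main technical burden of the proof'' without supplying it. Note also that classical Liv\v{s}ic theorems require H\"older regularity and can fail for merely continuous cocycles, so there is no off-the-shelf statement to quote; that a continuous version holds in this non-Archimedean setting is precisely what has to be proved.

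The paper's proof (following Adolphson) \emph{is} the explicit non-Archimedean Liv\v{s}ic argument you are gesturing at. After the change of variables $F(\xx)=R(-\xx)$, for each $\xx$ it writes down two specific periodic points $\alpha_n(\xx),\beta_n(\xx)$ of $\varphi$-periods $2n-1$ and $2n-2$, whose digit strings consist of a constant block $\bb,\ldots,\bb$ followed by the first $n$ (resp.\ $n-1$) digits of $\xx$. Applying the periodic-orbit hypothesis to both and dividing produces an identity $F(\varphi^{(n-1)}(\alpha_n(\xx))) = A_n(\xx)\,B_n(\xx)\cdot G_n(\xx)/G_n(\varphi(\xx))$ for explicit $A_n,B_n,G_n$; uniform continuity of $F$ on the compact set $\ovl{\Ocal}$ together with the ultrametric then force $A_n,B_n\to 1$ and $G_n\to G$ uniformly, with $G$ continuous and non-vanishing. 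This concrete limiting procedure is exactly the content your proposal omits.
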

	
	As corollaries, we obtain the complete characterizations of all continuous non-vanishing functions satisfying Gross-Koblitz-Thakur formulas for the arithmetic, geometric and two-variable gamma functions, respectively.
	Specifically, for each $\bullet = \text{ari, geo, two}$, we choose $(\ovl{\Ocal}{}^\bullet,\ppi^\bullet)$ to be $(\ZZ_p,q^d), (A_v,v), (A_v \times \ZZ_p, (v,q^d))$, respectively.
	We denote $\varphi^\bullet: \ovl{\Ocal}{}^\bullet \to \ovl{\Ocal}{}^\bullet$ to be the corresponding $\varphi$ function.
	Then we have the following consequence.
	
	\begin{cor}
		For any $\bullet \in \{ \textnormal{ari, geo, two}\}$, a function $H: \ovl{\Ocal}{}^\bullet \to k_v$ is continuous non-vanishing and satisfies the corresponding Gross-Koblitz-Thakur formula (\eqref{GKT formula ari}, \eqref{GKT formula geo}, \eqref{GKT formula two}) if and only if
		$$
		H(\xx) = \Gamma^\bullet_v(\xx) \cdot \frac{G(\xx)}{G\left(-\varphi^\bullet(-\xx)\right)}
		$$
		on $\ovl{\Ocal}{}^\bullet$, where $G: \ovl{\Ocal}{}^\bullet \to k_v$ is any continuous non-vanishing function.
	\end{cor}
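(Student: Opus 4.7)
The plan is to deduce the Corollary as a direct specialization of Theorem \ref{main result}, which we take as given. For each $\bullet \in \{\text{ari}, \text{geo}, \text{two}\}$, one applies the Main Theorem with $(\ovl{\Ocal}, \ppi) := (\ovl{\Ocal}{}^\bullet, \ppi^\bullet)$ and $\GGamma := \Gamma_v^\bullet$. Once this application is justified, the Main Theorem produces exactly the parameterization $H(\xx) = \Gamma_v^\bullet(\xx)\cdot G(\xx)/G(-\varphi^\bullet(-\xx))$ claimed in the Corollary, so the task reduces to verifying that the hypothesis ``$H$ satisfies the corresponding GKT formula \eqref{GKT formula ari}, \eqref{GKT formula geo}, or \eqref{GKT formula two}'' is equivalent to the abstract hypothesis \eqref{condition-in-main-result} of the Main Theorem for the chosen data.

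For this equivalence, I would, in each case, split the right-hand side of the GKT formula into factors that do involve the gamma function and factors that do not. The ``non-gamma'' data---the power of $\varpi_v$ in \eqref{GKT formula ari}; the product $\prod \delta_{x,j}/\langle v^j x\rangle^\flat$ in \eqref{GKT formula geo}; and those same factors together with $\ggs(x)^{q-1}$ in \eqref{GKT formula two}---are all defined independently of any choice of gamma function, since in the two-variable case $\ggs(x)$ is itself a Gauss sum rather than a gamma-product. Similarly the left-hand side ($\ags(y)$, $\ggs(x)$, or $\ggs(x,y)$) is gamma-independent. Consequently, substituting $H$ for $\Gamma_v^\bullet$ on the right-hand side and comparing with the original formula shows that ``$H$ satisfies the GKT formula'' is equivalent to the identity
\[
\prod_{j=0}^{\l-1} H\bigl(\langle(\ppi^\bullet)^j\xx\rangle\bigr) \;=\; \prod_{j=0}^{\l-1} \Gamma_v^\bullet\bigl(\langle(\ppi^\bullet)^j\xx\rangle\bigr),
\]
after translating the variable $y$ (arithmetic), $x$ (geometric), or the pair $(x,y)$ (two-variable) into $\xx$; this is precisely \eqref{condition-in-main-result} with $n = \l$.

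Since Theorem \ref{main result} is assumed, no substantive obstacle remains; the proof reduces to routine bookkeeping of notations. The point to check is that the indexing sets $(q^{d\l}-1)^{-1}\ZZ$, $(\nfk-1)^{-1}A$, and their product, together with the inequalities $0\le y<1$ and $|x|<1$, correspond precisely to the set $((\ppi^\bullet)^n - \1)^{-1}\Ocal$ subject to the condition \eqref{con} (using that for the two-variable case $\Ocal = A \times \ZZ$, the componentwise condition \eqref{con} restricts the $A$-coordinate by $|\cdot|<1$ and the $\ZZ$-coordinate by $0\le\cdot<1$). The only subtlety is confirming that the auxiliary constants $\varpi_v^{(q^d-1)\sum\langle\cdot\rangle}$, $\delta_{x,j}$, and $\ggs(x)^{q-1}$ genuinely cancel when $\Gamma_v^\bullet$ is replaced by $H$---and this is automatic given that, as noted, these constants do not depend on the chosen gamma function.
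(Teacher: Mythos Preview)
Your proposal is correct and matches the paper's (implicit) approach: the paper states the Corollary immediately after Theorem~\ref{main result} without separate proof, treating it as a direct specialization, and you have spelled out precisely why the specialization works---namely, that the non-gamma factors in each GKT formula (the power of $\varpi_v$, the $\delta_{x,j}/\langle v^jx\rangle^\flat$, and the Gauss-sum term $\ggs(x)^{q-1}$) are defined independently of the gamma function and hence cancel, reducing the hypothesis to \eqref{condition-in-main-result}. One small point worth noting explicitly: in the geometric formula \eqref{GKT formula geo} the gamma values appear with exponent $-1$, but this is harmless since equality of the inverse products is equivalent to equality of the products themselves.
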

	
	\subsection{Strategy of Proof}
	
	Our proof is inspired by the original paper \cite{adolphson1983uniqueness} (in particular, the Remark 3 and the theorem after it).
	First, it is observed in Section \ref{if} that \eqref{condition-in-main-result} can be rewritten as (for the same conditions on $\xx$)
	$$
	\prod_{j=0}^{n-1} \GGamma\left(-\varphi^{(j)}(-\xx)\right)
	= \prod_{j=0}^{n-1} H\left(-\varphi^{(j)}(-\xx)\right).
	$$
	Such expression will immediately imply the ``if'' direction.
	
	For the ``only if'' part, we set $F(\xx) := H(\xx)/\GGamma(\xx)$.
	Then what we want to show is that $F$ must be of the form (Theorem \ref{thm-only-if} and \cite[Theorem 1]{adolphson1983uniqueness})
	\[F(\xx)=\frac{G(\xx)}{G\left(-\varphi(-\xx)\right)}\]
	on $\ovl{\Ocal}$ for some continuous non-vanishing function $G: \ovl{\Ocal} \to K$.
	To prove this, we consider $\tilde{F}(\xx) := F(-\xx)$ (to eliminate the minus signs; Theorem \ref{change of variables} and \cite[Theorem 2]{adolphson1983uniqueness}) and construct two sequences of continuous functions $\alpha_n(\xx)$ and $\beta_n(\xx)$ (see \eqref{alpha_n} and \eqref{beta_n}).
	Then it is shown in \eqref{FABG} that
	$$
	\tilde{F}\left(\varphi^{(n-1)}\left(\alpha_n(\xx)\right)\right)=A_n(\xx)\cdot B_n(\xx)\cdot \frac{\tilde{G}_n(\xx)}{\tilde{G}_n\left(\varphi(\xx)\right)}
	$$
	for some (explicit in terms of $\tilde{F},\alpha_n,\beta_n$ and $\varphi$) functions $A_n,B_n,\tilde{G}_n: \ovl{\Ocal} \to K$.
	With some special properties of $\tilde{F},\alpha_n$ and $\beta_n$ (see \eqref{new condition}, \eqref{properties of alpha beta}, etc.), we prove that the left-hand side converges to $\tilde{F}(\xx)$ and the right-hand side converges to $\tilde{G}(\xx)/\tilde{G}(\varphi(\xx))$ for some continuous non-vanishing function $\tilde{G}:\ovl{\Ocal} \to K$ (Lemma \ref{all limits}).
	Then the desired function $G(\xx)$ turns out to be $\tilde{G}(-\xx)$, and the result follows.
	
	\section{Proof of the Theorem \ref{main result}}
	
	\subsection{The ``if'' Direction}    \label{if}
	
	In this section, we give a proof of “$\Leftarrow$” of Theorem \ref{main result}. The primary key of our proof is to reformulate \eqref{condition-in-main-result} as follows.
	Recall that $\varphi : \ovl{\Ocal} \to \ovl{\Ocal}$ is defined by $\varphi(\xx) = (\varphi_\idx(\xx_\idx))_\idx$
	where each $\varphi_\idx: \ovl{\Ocal}_\idx \to \ovl{\Ocal}_\idx$ is given by
	$$
	\varphi_\idx(\xx_\idx)
	:= \sum_{i=0}^\infty x_{\idx,i+1}\pi_\idx^i
	$$ 
	for any
	$$
	\xx_\idx
	:= \sum_{i=0}^\infty x_{\idx,i}\pi_\idx^i \in \ovl{\Ocal}_\idx.
	$$
	For $\xx,\yy\in \ovl{\Ocal}$, we denote by
	$$
	\xx\equiv\yy \pmod{\ppi^n}
	$$
	if 
	$$
	\xx_\idx \equiv \yy_\idx \pmod{\pi_\idx^n}
	$$
	for all $\idx$.
	We note that for any $\xx,\yy\in \ovl{\Ocal}$,
	\begin{equation}    \label{uni conti of phi}
		\xx\equiv \yy\pmod {\ppi^{n}}\implies \varphi(\xx)\equiv\varphi(\yy)\pmod {\ppi^{n-1}}.
	\end{equation}
	Furthermore, if $\xx = (\xx_\idx)_\idx\in(\ppi^{n}-\1)^{-1}\oo$ satisfying the condition \eqref{con}, then we have
	$$
	\{\langle \ppi^{j}\xx \rangle:0\leq j\leq n-1\}
	= \{-\varphi^{(j)}(-\xx):0\leq j\leq n-1\}.
	$$
	The identity above follows from expressing each $\xx_\idx = m_\idx/(\pi_\idx^n-1)$ where $0 \leq |m_\idx| < |\pi_\idx^n-1|$
	(this  means that $m_\idx$ is a positive integer with $0 \leq m_\idx < \pi_\idx^n-1$ when $\Ocal_\idx = \ZZ$ or a polynomial in $A$ with $\deg m_\idx < \deg(\pi_\idx^n-1) = \deg \pi_\idx^n$ when $\Ocal_\idx = A$), and observing that the element $\ang{\pi_\idx \xx_\idx}$ (resp. $-\varphi_\idx(-{\xx_\idx})$) in each component corresponds to shifting the $\pi_\idx$-adic digits of $m_\idx$ one step right (resp. left).
	So when $j$ runs through $0$ to $n-1$, two sets are in fact identical.
	To conclude, the formula \eqref{condition-in-main-result} can be rewritten as follows:
	For all $n\in\bbN$ and $\xx \in (\ppi^n-\1)^{-1} \Ocal$ satisfying the condition \eqref{con},
	\begin{equation}\label{reformulation of GKT}
		\prod_{j=0}^{n-1} \GGamma\left(-\varphi^{(j)}(-{\xx})\right)
		= \prod_{j=0}^{n-1} H\left(-\varphi^{(j)}(-{\xx})\right).
	\end{equation}
	
	To complete the proof of “$\Leftarrow$” of Theorem~\ref{main result}, suppose that we are given any continuous non-vanishing function $G:\ovl{\Ocal} \to K$. We claim that the following function
	$$
	H(\xx):=\GGamma(\xx)\cdot\frac{G(\xx)}{G\left(-\varphi(-\xx)\right)}: \ovl{\Ocal}\rightarrow K
	$$
	is continuous, non-vanishing, and satisfies the identity \eqref{reformulation of GKT}, whence the proof would be completed.
	To prove the claim above, we first note that the continuous and non-vanishing properties are clear from the definition of $H$. To show the formula \eqref{reformulation of GKT}, we further mention that for $\xx \in(\ppi^{n}-\1)^{-1}\oo$ with condition \eqref{con}, we have
	\begin{equation}    \label{product-of-G}
		\prod_{j=0}^{n-1}\frac{G\left(-\varphi^{(j)}\left(-{\xx}\right)\right)}{G\left(-\varphi^{(j+1)}\left(-{\xx}\right)\right)}=1
	\end{equation}
	as $-\varphi^{(n)} (-\xx) = \xx$. Hence, the desired identity is satisfied.
	
	\subsection{The ``only if'' Direction}
	
	We now proceed to prove the only if part of Theorem \ref{main result}.
	For any $n \in \NN$ and $\xx = (\xx_\idx)_\idx \in \ovl{\Ocal}$, we observe that $\xx  \in (\ppi^n-\1)^{-1} \Ocal$ and $\xx$ satisfies \eqref{con} if and only if $\varphi^{(n)}(-\xx) = -\xx$ and $|\xx_t| < 1$ for all $t$.
	Let $F(\xx):={H(\xx)}/{\GGamma(\xx)}$, then using the reformulation \eqref{reformulation of GKT}, we need to show the following.
	
	\begin{thm}    \label{thm-only-if}
		Let $F: \ovl{\Ocal} \to K$ be a continuous non-vanishing function. Suppose that for all $n\in\bbN$ and $\xx=(\xx_\idx)_\idx\in \ovl{\Ocal}$ with $|\xx_\idx|<1$ for all $\idx$,
		$$
		\varphi^{(n)}(-\xx)=-\xx \implies \prod_{j=0}^{n-1}F\left(-\varphi^{(j)}(-\xx)\right)=1.
		$$
		Then there exists a continuous non-vanishing function $G:\ovl{\Ocal} \to K$ such that $$F(\xx)=\frac{G(\xx)}{G(-\varphi(-\xx))}$$
		for all $\xx\in \ovl{\Ocal}$.
	\end{thm}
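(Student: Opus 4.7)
The plan is to follow the reduction and limiting procedure sketched in the introduction. Setting $\tilde{F}(\xx) := F(-\xx)$, the hypothesis of the theorem translates into the statement that $\prod_{j=0}^{n-1} \tilde{F}(\varphi^{(j)}(\xx)) = 1$ whenever $\varphi^{(n)}(\xx)=\xx$ and $|\xx_\idx|<1$ for all $\idx$, and the desired conclusion reduces to producing a continuous non-vanishing $\tilde{G}: \ovl{\Ocal} \to K$ satisfying
\[
\tilde{F}(\xx) = \frac{\tilde{G}(\xx)}{\tilde{G}(\varphi(\xx))},
\]
since then $G(\xx):=\tilde{G}(-\xx)$ satisfies $F(\xx) = G(\xx)/G(-\varphi(-\xx))$ as required. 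This change of variables aligns the problem with a cleaner $\varphi$-cocycle perspective.

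For each $\xx = (\xx_\idx)_\idx \in \ovl{\Ocal}$ and each $n \geq 1$, I will introduce continuous auxiliary maps $\alpha_n, \beta_n: \ovl{\Ocal} \to \ovl{\Ocal}$ producing $\varphi^{(n)}$-periodic points close to $\xx$. The prototype for $\alpha_n$ is the periodization of the first $n$ digits, $\alpha_n(\xx)_\idx := \sum_{i \geq 0} x_{\idx,\, i \bmod n}\,\pi_\idx^i$, which is fixed by $\varphi^{(n)}$ and satisfies $\alpha_n(\xx)\to\xx$; the auxiliary $\beta_n$ is a modification designed to compensate for the fact that $\alpha_n\circ\varphi$ and $\varphi\circ\alpha_n$ differ in the final block of digits. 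Applying the reformulated cocycle identity to these periodic approximants and then rearranging yields a telescoping relation of the form
\[
\tilde{F}\bigl(\varphi^{(n-1)}(\alpha_n(\xx))\bigr) = A_n(\xx)\,B_n(\xx)\,\frac{\tilde{G}_n(\xx)}{\tilde{G}_n(\varphi(\xx))},
\]
where $\tilde{G}_n(\xx)$ is a finite product of $\tilde{F}$-values along a $\varphi$-orbit associated to $\xx$, and $A_n, B_n$ are explicit correction factors built from $\tilde{F}$ evaluated at $\alpha_n, \beta_n$ and their $\varphi$-iterates.

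As $n \to \infty$, continuity of $\tilde{F}$ together with $\varphi^{(n-1)}(\alpha_n(\xx)) \to \xx$ will force the left-hand side to tend to $\tilde{F}(\xx)$, and uniform continuity of $\tilde{F}$ on the compact space $\ovl{\Ocal}$ combined with the convergence properties of $\alpha_n, \beta_n$ will force $A_n, B_n \to 1$ uniformly in $\xx$. The heart of the argument, and the main obstacle, is to show that $\tilde{G}_n$ itself (not merely the ratio $\tilde{G}_n(\xx)/\tilde{G}_n(\varphi(\xx))$) converges uniformly to a continuous non-vanishing limit $\tilde{G}: \ovl{\Ocal} \to K$. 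This reduces to proving that successive quotients $\tilde{G}_{n+1}/\tilde{G}_n$ lie in ever-smaller neighborhoods of $1$, an estimate that should follow from uniform continuity of $\tilde{F}$ together with the ultrametric property of $K$, since in a complete non-Archimedean field a product of finitely many factors each close to $1$ remains close to $1$. Once $\tilde{G}$ is constructed, passing to the limit in the displayed telescoping identity yields $\tilde{F}(\xx) = \tilde{G}(\xx)/\tilde{G}(\varphi(\xx))$, and continuity plus non-vanishing of $\tilde{G}$ follow from uniform convergence of the $\tilde{G}_n$'s together with the non-vanishing of $\tilde{F}$.
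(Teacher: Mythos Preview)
Your overall architecture—the change of variables to $\tilde{F}$, periodic approximants, a telescoping identity of the displayed shape, and a limiting argument—matches the paper's. The gap lies in your concrete choice of $\alpha_n$ as the plain $n$-periodization of the first $n$ digits of $\xx$. The paper does \emph{not} do this: its $\alpha_n(\xx)$ has period $2n-1$, with the first $n-1$ digits all equal to a fixed $\bb=(b_t)_t$ satisfying $b_t<\pi_t-1$ (strict inequality), followed by the first $n$ digits of $\xx$; $\beta_n$ is built the same way with period $2n-2$. This padding is not cosmetic. First, it guarantees that $\alpha_n(\xx)$ and $\beta_n(\xx)$ meet the side condition $|\cdot_t|<1$ required to invoke the hypothesis; your plain periodization can violate it in a $\ZZ$-component when the leading digits $x_{t,0},\dots,x_{t,n-1}$ are all equal to $\pi_t-1$. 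Second, and this is the decisive point, $\bb/(\1-\ppi)$ is a $\varphi$-fixed point satisfying the side condition, so the hypothesis forces $\tilde{F}(\bb/(\1-\ppi))=1$. Since $\beta_n(\xx)\equiv \bb/(\1-\ppi)\pmod{\ppi^{n-1}}$, all but a \emph{bounded} number (independent of $n$) of the factors in $G_n(\xx)=\bigl[\prod_{i=0}^{n-2}\tilde{F}(\varphi^{(i)}(\beta_n(\xx)))\bigr]^{-1}$ have $\tt$-valuation zero. That is what yields a uniform bound on $\ord_\tt G_n$ and hence convergence to a non-vanishing limit; the successive-quotient estimate then uses $\varphi(\beta_{n+1}(\xx))\equiv\beta_n(\xx)\pmod{\ppi^{2n-2}}$, which again relies on the shared $\bb$-block.

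With your plain $n$-periodization, $\tilde{G}_n$ is a product of roughly $n$ values of $\tilde{F}$, each of valuation in some fixed interval $[\delta_1,\delta_2]$, so $\ord_\tt\tilde{G}_n$ can drift linearly in $n$ and there is no reason for the limit to exist or be nonzero. Your proposed control via ``$\tilde{G}_{n+1}/\tilde{G}_n$ close to $1$ by uniform continuity'' does not rescue this: the $n$-periodization and the $(n+1)$-periodization agree only in their first $n$ digits, so after applying $\varphi^{(j)}$ for $j$ close to $n$ they agree in almost no digits, and the corresponding $\tilde{F}$-values need not be close. You have correctly identified the convergence of $\tilde{G}_n$ as the crux; what is missing from your proposal is precisely the $\bb$-padding device that makes it go through.
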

	
	By replacing $F(\xx),G(\xx)$ with $F(-\xx),G(-\xx)$ respectively, Theorem \ref{thm-only-if} is equivalent to the following.
	
	\begin{thm}    \label{change of variables}
		Let $F:\ovl{\Ocal} \to K$ be a continuous non-vanishing function.
		Suppose that for all $n\in\bbN$ and $\xx=(\xx_\idx)_\idx\in\ovl{\Ocal}$ with $|\xx_\idx|<1$ for all $\idx$,
		\begin{equation}\label{new condition}
			\varphi^{(n)}(\xx)=\xx \implies \prod_{j=0}^{n-1}F\left(\varphi^{(j)}(\xx)\right)=1.
		\end{equation}
		Then there exists a continuous non-vanishing function $G:\ovl{\Ocal} \to K$ such that $$F(\xx)=\frac{G(\xx)}{G(\varphi(\xx))}$$
		for all $\xx\in\ovl{\Ocal}$.
	\end{thm}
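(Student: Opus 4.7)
The plan is to prove Theorem \ref{change of variables} (equivalent to Theorem \ref{thm-only-if} via the substitution $\xx \mapsto -\xx$) by approximating an arbitrary $\xx \in \ovl{\Ocal}$ through $\varphi$-periodic points, to which the hypothesis \eqref{new condition} applies, and extracting from the resulting identities a continuous non-vanishing function $\tilde{G}: \ovl{\Ocal} \to K$ satisfying $F(\xx) = \tilde{G}(\xx)/\tilde{G}(\varphi(\xx))$. The function required by Theorem \ref{thm-only-if} is then $G(\xx) := \tilde{G}(-\xx)$.

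Concretely, for each $n \in \NN$ I would introduce two explicit continuous maps $\alpha_n, \beta_n: \ovl{\Ocal} \to \ovl{\Ocal}$ engineered so that $\alpha_n(\xx)$ is $\varphi$-periodic of period dividing $n$ with $|\alpha_n(\xx)_\idx|<1$ for every $\idx$ (so the hypothesis applies), while the $\pi$-adic digits of $\alpha_n(\xx)$ and $\beta_n(\xx)$ are built from those of $\xx$ in a pattern ensuring that $\varphi^{(n-1)}(\alpha_n(\xx))$ agrees with $\xx$ in a growing number of leading digits as $n \to \infty$. Applying \eqref{new condition} to $\alpha_n(\xx)$ yields the relation $\prod_{j=0}^{n-1} F(\varphi^{(j)}(\alpha_n(\xx))) = 1$; by isolating the factor at $j=n-1$ and rearranging the remaining $n-1$ factors using $\beta_n$, I would derive an exact identity
\[
F\bigl(\varphi^{(n-1)}(\alpha_n(\xx))\bigr) = A_n(\xx) \cdot B_n(\xx) \cdot \frac{\tilde{G}_n(\xx)}{\tilde{G}_n(\varphi(\xx))},
\]
where $A_n, B_n, \tilde{G}_n: \ovl{\Ocal} \to K$ are continuous non-vanishing functions explicitly given in terms of $F, \alpha_n, \beta_n$ and $\varphi$.

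The analytic heart of the proof, corresponding to Lemma \ref{all limits}, is to show that each factor has the desired limiting behavior. Since $\ovl{\Ocal}$ is compact, $F$ is uniformly continuous, so the digit-matching property above yields uniform convergence $F(\varphi^{(n-1)}(\alpha_n(\xx))) \to F(\xx)$. I would then show, from the explicit form of $\alpha_n, \beta_n$, that $A_n(\xx) \to 1$ and $B_n(\xx) \to 1$ uniformly in $\xx$, and that the sequence $\tilde{G}_n$ is uniformly Cauchy on $\ovl{\Ocal}$ with non-vanishing limit $\tilde{G}$. Passing to the limit in the displayed identity then gives $F(\xx) = \tilde{G}(\xx)/\tilde{G}(\varphi(\xx))$ on all of $\ovl{\Ocal}$, completing the argument.

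The principal obstacle I anticipate is designing $\alpha_n$ and $\beta_n$ so that the displayed identity is an exact equality (rather than only an approximate one) while all three uniform convergence statements hold. In the non-Archimedean setting, convergence of $\tilde{G}_n$ to a non-vanishing limit requires a bound of the form $|1 - \tilde{G}_n(\xx)/\tilde{G}_{n+1}(\xx)| \to 0$ uniformly in $\xx$, which in turn constrains how many $\pi$-adic digits of $\alpha_n(\xx), \beta_n(\xx)$ must stabilize between $n$ and $n+1$. A further bookkeeping subtlety is that components with $\Ocal_\idx = \ZZ$ involve digit subtractions that may produce carries in base $\pi_\idx$, whereas components with $\Ocal_\idx = A$ have no carries but a different digit-set convention, and the construction must treat both types uniformly.
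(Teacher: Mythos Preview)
Your strategy—approximate by $\varphi$-periodic points, apply \eqref{new condition}, extract an exact identity $F(\text{approx})=A_nB_n\,\tilde G_n(\xx)/\tilde G_n(\varphi(\xx))$, then pass to the limit via a four-part lemma—is exactly the paper's. Two specifics in your outline would fail as written, however. Taking $\alpha_n(\xx)$ of period dividing $n$ is incompatible with the side condition $|\alpha_n(\xx)_\idx|<1$: a period-$n$ point with $\varphi^{(n-1)}(\alpha_n(\xx))\equiv\xx\pmod{\ppi^n}$ is forced to have leading $\pi_\idx$-adic digit $x_{\idx,1}$, which is arbitrary and may be the top residue. The paper instead fixes a padding digit $\bb=(b_\idx)$ with $0\le b_\idx<\pi_\idx-1$ (resp.\ $|b_\idx|<|\pi_\idx-1|$) and lets $\alpha_n(\xx)$ have repeating digit block $(b,\dots,b,x_0,\dots,x_{n-1})$ of length $2n-1$, and $\beta_n(\xx)$ the block $(b,\dots,b,x_0,\dots,x_{n-2})$ of length $2n-2$. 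Because this is purely digit-wise, your carry concern is moot and the $\ZZ$ and $A$ components are handled identically.

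Second, a single application of \eqref{new condition} to $\alpha_n(\xx)$ cannot by itself produce the ratio $\tilde G_n(\xx)/\tilde G_n(\varphi(\xx))$: after isolating one factor you are left with a product over a $\varphi$-orbit, not a comparison of the same function at $\xx$ and at $\varphi(\xx)$. The paper applies the hypothesis a \emph{second} time, to $\beta_n(\varphi(\xx))$ (period $2n-2$), equates the two ``$=1$'' products, and then multiplies numerator and denominator by $\prod_{i=0}^{n-2}F\bigl(\varphi^{(i)}(\beta_n(\xx))\bigr)$; it is this step that makes both $\beta_n(\xx)$ and $\beta_n(\varphi(\xx))$ appear and yields the exact telescoping form. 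With these two corrections your plan is the paper's proof verbatim, and the padding by $\bb$ is also what makes the uniform-Cauchy and non-vanishing claims for $\tilde G_n$ go through (since $F(\bb/(\1-\ppi))=1$ controls all but boundedly many factors).
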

	
	\begin{proof}
		Let $\bb:=(b_\idx)_\idx\in\oo$ such that for each $\idx$,
		\begin{equation}    \label{bt}
			\begin{cases}
				0 \leq b_t < \pi_t-1,  &\text{if } \Ocal_\idx = \ZZ, \\
				|b_t| < |\pi_t-1|,  &\text{if } \Ocal_\idx = A.
			\end{cases}
		\end{equation}
		Recall for any $\xx = (\xx_\idx)_\idx \in \ovl{\Ocal}$, we write
		\[\xx_\idx := \sum_{i=0}^\infty x_{\idx,i}\pi_\idx^i \in \ovl{\Ocal}_\idx
		\] in $\pi_\idx$-adic expansion.
		We set
		$$
		\xx_{(i)}:=(x_{\idx,i})_\idx\in\oo
		\quad
		\text{so that}
		\quad
		\xx = \sum_{i=0}^{\infty} \xx_{(i)}\ppi^i.
		$$
		Define $\alpha_{n},\beta_n:\ovl{\Ocal} \to \ovl{\Ocal}$ by
		\begin{equation}\label{alpha_n}
			\alpha_{n}(\xx)=\frac{1}{\1-\ppi^{2n-1}}\left(\bb+\bb\ppi+\cdots+\bb \ppi^{n-2}+\ppi^{n-1}\left(\sum_{i=0}^{n-1}\xx_{(i)}\ppi^{i}\right)\right),
		\end{equation}
		and
		\begin{equation}\label{beta_n}
			\beta_{n}(\xx)=\frac{1}{\1-\ppi^{2n-2}}\left(\bb+\bb\ppi+\cdots+\bb \ppi^{n-2}+\ppi^{n-1}\left(\sum_{i=0}^{n-2}\xx_{(i)}\ppi^{i}\right)\right).
		\end{equation}
		We remark that $\alpha_n,\beta_n$ are locally constant and hence continuous on $\ovl{\Ocal}$. Moreover, note that for any $\xx\in\ovl{\Ocal}$, we have
		\[|\alpha_{n}(\xx)_\idx|<1
		\quad
		\text{and}
		\quad
		|\beta_{n}(\xx)_\idx|<1
		\]
		for all $t$ by our choice of $b_\idx$ \eqref{bt}, and 
		\begin{equation}\label{properties of alpha beta}
			\varphi^{(2n-1)}\left(\alpha_n(\xx)\right)=\alpha_n(\xx)
			\quad
			\text{and}
			\quad
			\varphi^{(2n-2)}\left(\beta_n\left(\varphi(\xx)\right)\right)=\beta_n\left(\varphi(\xx)\right).
		\end{equation}
		Hence by \eqref{new condition}, we obtain
		\[\prod_{i=0}^{2n-2}F\left(\varphi^{(i)}\left(\alpha_n(\xx)\right)\right)=1
		\quad\text{and}
		\quad
		\prod_{i=0}^{2n-3}F\left(\varphi^{(i)}\left(\beta_n\left(\varphi(\xx)\right)\right)\right)=1.\]
		By equating these two expressions, we have
		\[F\left(\varphi^{(n-1)}\left(\alpha_n(\xx)\right)\right)=\frac{\prod_{i=0}^{2n-3}F\left(\varphi^{(i)}\left(\beta_n\left(\varphi(\xx)\right)\right)\right)}{\prod_{i=0}^{n-2}F\left(\varphi^{(i)}\left(\alpha_n(\xx)\right)\right)\prod_{i=n}^{2n-2}F\left(\varphi^{(i)}\left(\alpha_n(\xx)\right)\right)}.\]
		By multiplying both the numerator and the denominator of the the right-hand side by $\prod_{i=0}^{n-2}F\left(\varphi^{(i)}\left(\beta_n(\xx)\right)\right)$, we obtain
		\begin{equation}   \label{FABG}
			F\left(\varphi^{(n-1)}\left(\alpha_n(\xx)\right)\right)=A_n(\xx)\cdot B_n(\xx)\cdot \frac{G_n(\xx)}{G_n\left(\varphi(\xx)\right)}
		\end{equation}
		where 
		\begin{equation}\label{An Bn} A_n(\xx):=\frac{\prod_{i=0}^{n-2}F\left(\varphi^{(i)}\left(\beta_n(\xx)\right)\right)}{\prod_{i=0}^{n-2}F\left(\varphi^{(i)}\left(\alpha_n(\xx)\right)\right)},
			\quad
			B_n(\xx):=\frac{\prod_{i=n-1}^{2n-3}F\left(\varphi^{(i)}\left(\beta_n\left(\varphi(\xx)\right)\right)\right)}{\prod_{i=n}^{2n-2}F\left(\varphi^{(i)}\left(\alpha_n(\xx)\right)\right)} 
		\end{equation}
		and
		\begin{equation}\label{def of G_n}
			G_n(\xx):=\left[\prod_{i=0}^{n-2}F\left(\varphi^{(i)}\left(\beta_n(\xx)\right)\right)\right]^{-1}.
		\end{equation}
		
		The result now follows from the following lemma, which will be proved in the next section.
		
		\begin{lem}\label{all limits}
			The following statements hold.
			\begin{enumerate}
				\item $\lim\limits_{n\to\infty}F\left(\varphi^{(n-1)}\left(\alpha_n(\xx)\right)\right)=F(\xx).$
				
				\item $\lim\limits_{n\to\infty}A_n(\xx)=1.$
				
				\item $\lim\limits_{n\to\infty}B_n(\xx)=1.$
				
				\item $\lim\limits_{n\to\infty}G_n(\xx)=G(\xx)$, where $G$ is a continuous non-vanishing function on $\ovl{\Ocal}$.
			\end{enumerate}
		\end{lem}
	\end{proof}
	
	\subsection{Proof of Lemma \ref{all limits}}
	
	In this section, we prove Lemma \ref{all limits}.
	In what follows, we fix a uniformizer $\tt$ of $K$.
	
	\begin{claim} 		
		$\lim\limits_{n\to\infty}F\left(\varphi^{(n-1)}\left(\alpha_n(\xx)\right)\right)=F(\xx).$
	\end{claim}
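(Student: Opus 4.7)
The plan is to use continuity of $F$ on the compact $\ppi$-adic space $\ovl{\Ocal}$, reducing the claim to showing that $\varphi^{(n-1)}\bigl(\alpha_n(\xx)\bigr) \to \xx$ as $n \to \infty$ in $\ovl{\Ocal}$. By definition of the $\ppi$-adic topology, it suffices to establish the stronger congruence
\[
\varphi^{(n-1)}(\alpha_n(\xx)) \equiv \xx \pmod{\ppi^n}
\]
for every $n$, which I would pin down by explicitly computing the $\ppi$-adic digits of $\alpha_n(\xx)$.

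The key step is to expand $\alpha_n(\xx)$ using the geometric-series identity $(\1 - \ppi^{2n-1})^{-1} = \sum_{k\geq 0}\ppi^{k(2n-1)}$ in $\ovl{\Ocal}$. The bracketed quantity in \eqref{alpha_n} equals the $\ppi$-polynomial
\[
\bb + \bb\ppi + \cdots + \bb\ppi^{n-2} + \xx_{(0)}\ppi^{n-1} + \xx_{(1)}\ppi^n + \cdots + \xx_{(n-1)}\ppi^{2n-2},
\]
of $\ppi$-adic length exactly $2n-1$, whose coefficients are all valid $\ppi$-adic digits: $\bb$ by hypothesis \eqref{bt}, and $\xx_{(i)}$ by definition. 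Multiplication by the non-overlapping shifts $\ppi^{k(2n-1)}$ therefore yields (without any carrying or overlap) the unique purely-periodic $\ppi$-adic expansion of $\alpha_n(\xx)$, whose one block reads $\bb,\bb,\ldots,\bb,\xx_{(0)},\xx_{(1)},\ldots,\xx_{(n-1)}$ with $n-1$ copies of $\bb$ placed first. Since each $\varphi_\idx$ simply deletes the zeroth digit and reindexes, applying $\varphi^{(n-1)}$ componentwise rotates the block so that the first $n$ digits of $\varphi^{(n-1)}(\alpha_n(\xx))$ are precisely $\xx_{(0)},\xx_{(1)},\ldots,\xx_{(n-1)}$. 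This yields the required congruence modulo $\ppi^n$, and the continuity of $F$ then closes the claim.

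The one point requiring care is the no-carrying/no-overlap assertion for the geometric expansion, and this is exactly what dictates the particular form of $\alpha_n$: the length of the bracketed polynomial matches the shift $\ppi^{2n-1}$, so successive copies slot together without interaction; the admissibility of $\bb$ as a digit is guaranteed by \eqref{bt}, in both the $\ZZ$- and $A$-components uniformly. Beyond this routine bookkeeping I foresee no substantial obstacle to this first part of Lemma \ref{all limits}.
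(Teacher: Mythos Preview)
Your proposal is correct and follows essentially the same approach as the paper: establish the congruence $\varphi^{(n-1)}(\alpha_n(\xx)) \equiv \xx \pmod{\ppi^n}$ and conclude by continuity of $F$. The only difference is that the paper asserts this congruence in one line without justification, whereas you spell out the periodic digit expansion of $\alpha_n(\xx)$ explicitly; your extra detail is correct and harmless.
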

	
	\begin{proof}
		Note that \[\varphi^{(n-1)}\left(\alpha_n(\xx)\right)\equiv \xx\pmod{\ppi^{n}}.\]
		Thus, we have that $\varphi^{(n-1)}\left(\alpha_n(\xx)\right)\to \xx$ as $n\to \infty$. By the continuity of $F$, the result follows.
	\end{proof}
	
	Since $F$ is continuous and non-vanishing on the compact set $\ovl{\Ocal}$  which is a finite product of the compact sets $\ovl{\Ocal}_\idx$), there exist $\delta_1,\delta_2\in\Z$ such that for all $\xx\in \ovl{\Ocal}$,x
	\begin{equation}\label{uni bounded of F}
		\delta_1\leq\ord_\tt F(\xx)\leq \delta_2.
	\end{equation}
	Moreover, since $F$ is continuous on $\ovl{\Ocal}$, it is uniformly continuous. Hence, given $\rr > 0$, there exists $N_\rr\in\bbN$ such that
	\begin{equation}\label{uni conti of F}
		\xx\equiv \yy\pmod{\ppi^{N_\rr}}\implies F(\xx)\equiv F(\yy)\pmod{\tt^{\rr+\delta_2}}.
	\end{equation}
	
	\begin{claim}    \label{limit of A_n}
		$\lim\limits_{n\to\infty}A_n(\xx)=1.$
	\end{claim}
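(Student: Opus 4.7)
The plan is to exploit the explicit periodic $\ppi$-adic expansions of $\alpha_n(\xx)$ and $\beta_n(\xx)$ to compare them $\ppi$-adically, then translate this comparison into a $\tt$-adic comparison of $F$-values via the uniform continuity of $F$, and finally invoke the non-Archimedean property of $K$ to pass to the product defining $A_n(\xx)$.

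First I would unpack the $\ppi$-adic digits of $\alpha_n(\xx)$ and $\beta_n(\xx)$. Since each component $b_\idx$ of $\bb$ is a valid $\pi_\idx$-adic digit by \eqref{bt}, and each component $x_{\idx,i}$ of $\xx_{(i)}$ is a valid $\pi_\idx$-adic digit by the definition of the $\ppi$-adic expansion of $\xx$, the geometric-series expansions of $(\1-\ppi^{2n-1})^{-1}$ and $(\1-\ppi^{2n-2})^{-1}$ produce carry-free periodic digit strings
$$\underbrace{\bb,\ldots,\bb}_{n-1},\ \xx_{(0)},\xx_{(1)},\ldots,\xx_{(n-1)}\qquad\text{and}\qquad\underbrace{\bb,\ldots,\bb}_{n-1},\ \xx_{(0)},\xx_{(1)},\ldots,\xx_{(n-2)}$$
of periods $2n-1$ and $2n-2$, respectively. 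These two strings agree on their first $2n-2$ digits, so $\alpha_n(\xx)\equiv\beta_n(\xx)\pmod{\ppi^{2n-2}}$; iterating \eqref{uni conti of phi} then gives
$$\varphi^{(i)}(\alpha_n(\xx))\equiv\varphi^{(i)}(\beta_n(\xx))\pmod{\ppi^{2n-2-i}}\qquad\text{for every }0\leq i\leq n-2.$$

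Next, for any prescribed $\rr>0$ and any $n\geq N_\rr$, I have $2n-2-i\geq n\geq N_\rr$ for every $0\leq i\leq n-2$, so \eqref{uni conti of F} yields
$$F\bigl(\varphi^{(i)}(\alpha_n(\xx))\bigr)\equiv F\bigl(\varphi^{(i)}(\beta_n(\xx))\bigr)\pmod{\tt^{\rr+\delta_2}}.$$
Dividing by $F(\varphi^{(i)}(\alpha_n(\xx)))$, whose $\tt$-valuation is at most $\delta_2$ by the uniform bound \eqref{uni bounded of F}, I conclude that each ratio $F(\varphi^{(i)}(\beta_n(\xx)))/F(\varphi^{(i)}(\alpha_n(\xx)))$ is congruent to $1$ modulo $\tt^\rr$.

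Finally, since $K$ is non-Archimedean, any finite product of elements congruent to $1$ modulo $\tt^\rr$ is again congruent to $1$ modulo $\tt^\rr$, as every nontrivial term in the expanded product has $\tt$-valuation at least $\rr$. Hence $A_n(\xx)\equiv 1\pmod{\tt^\rr}$ for all $n\geq N_\rr$, uniformly in $\xx\in\ovl{\Ocal}$, which establishes $\lim_{n\to\infty}A_n(\xx)=1$. The main technical step to execute carefully is the digit bookkeeping in the first paragraph: one must verify that the bounds in \eqref{bt} really force the geometric-series expansion to be carry-free in every component $\idx$, so that the digit agreement is exact on the first $2n-2$ leading positions; once that is in place, steps two through four are routine applications of the hypotheses already recorded.
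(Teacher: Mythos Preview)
Your proof is correct and follows essentially the same route as the paper: establish $\alpha_n(\xx)\equiv\beta_n(\xx)\pmod{\ppi^{2n-2}}$, propagate through $\varphi^{(i)}$ via \eqref{uni conti of phi}, apply the uniform continuity \eqref{uni conti of F} and the bound \eqref{uni bounded of F} to force each ratio $\equiv 1\pmod{\tt^\rr}$, and conclude for the product. The paper simply asserts the initial congruence without spelling out the digit bookkeeping you provide, and records the slightly weaker (but sufficient) modulus $\ppi^n$ after iterating $\varphi$; otherwise the arguments coincide.
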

	
	\begin{proof}
		Note that
		\[\alpha_n(\xx)\equiv\beta_n(\xx)\pmod{\ppi^{2n-2}}.\]
		Thus by $\eqref{uni conti of phi}$, for $i=0,1,\ldots,n-2$, we have
		\[\varphi^{(i)}\left(\alpha_n(\xx)\right)\equiv \varphi^{(i)}\left(\beta_n(\xx)\right)\pmod{\ppi^n}.\]
		So for $n\geq N_\rr$, 
		\[\varphi^{(i)}\left(\alpha_n(\xx)\right)\equiv \varphi^{(i)}\left(\beta_n(\xx)\right)\pmod{\ppi^{N_\rr} }.\]
		Then \eqref{uni conti of F} implies that
		\[F\left(\varphi^{(i)}\left(\alpha_n(\xx)\right)\right)\equiv F\left(\varphi^{(i)}\left(\beta_n(\xx)\right)\right)\pmod{\tt^{\rr+\delta_2}}.\] 
		By \eqref{uni bounded of F} we know $\ord_\tt F(\xx)\leq \delta_2$ for all $\xx \in \ovl{\Ocal}$, which shows that
		\[a_{i,n}(\xx):=\frac{F\left(\varphi^{(i)}\left(\beta_n(\xx)\right)\right)}{F\left(\varphi^{(i)}\left(\alpha_n(\xx)\right)\right)}\equiv 1\pmod{\tt^\rr}.\]
		Therefore, we obtain 
		\[A_n(\xx)=\prod_{i=0}^{n-2}a_{i,n}(\xx)\equiv 1\pmod{\tt^\rr}.\]
		This completes the proof.
	\end{proof}
	
	\begin{claim}
		$\lim\limits_{n\to\infty}B_n(\xx)=1.$
	\end{claim}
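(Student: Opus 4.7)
The plan is to mirror the argument of Claim \ref{limit of A_n} just above, but now comparing orbit points at \emph{different} iteration depths of $\varphi$. First I will re-index the two products in \eqref{An Bn} by substituting $j = i - (n-1)$ in the numerator and $j = i - n$ in the denominator, which rewrites
$$B_n(\xx) = \prod_{j=0}^{n-2} b_{j,n}(\xx), \qquad b_{j,n}(\xx) := \frac{F\bigl(\varphi^{(j+n-1)}(\beta_n(\varphi(\xx)))\bigr)}{F\bigl(\varphi^{(j+n)}(\alpha_n(\xx))\bigr)}.$$
Given $\rr > 0$, it will suffice to exhibit $b_{j,n}(\xx) \equiv 1 \pmod{\tt^\rr}$ uniformly in $j$ and $\xx$ for $n$ sufficiently large; the conclusion then follows from the non-Archimedean triangle inequality applied to the product.

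The heart of the argument will be the congruence
$$\varphi^{(j+n-1)}\bigl(\beta_n(\varphi(\xx))\bigr) \equiv \varphi^{(j+n)}\bigl(\alpha_n(\xx)\bigr) \pmod{\ppi^n}$$
for all $j \in \{0,\dots,n-2\}$, which I will establish by a direct comparison of the $\ppi$-adic expansions. By \eqref{properties of alpha beta} both $\alpha_n(\xx)$ and $\beta_n(\varphi(\xx))$ are purely periodic with periods $2n-1$ and $2n-2$ respectively, and each fundamental block begins with $n-1$ copies of $\bb$ at positions $0,\dots,n-2$; the following $\xx$-digits are $\xx_{(0)},\xx_{(1)},\dots,\xx_{(n-1)}$ for $\alpha_n(\xx)$ and $\xx_{(1)},\xx_{(2)},\dots,\xx_{(n-1)}$ for $\beta_n(\varphi(\xx))$. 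The extra shift in $\varphi^{(j+n)}$ versus $\varphi^{(j+n-1)}$ exactly compensates for the shift of the $\xx$-block's content between the two, so a routine digit-by-digit tracking (through the first wrap-around at positions $n-1-j$ and onwards) will show that the $k$-th digits of both sides agree for $0 \leq k \leq 2n-3-j$, whence the two sides agree modulo $\ppi^{2n-2-j}$. Since $j \leq n-2$, this is at worst $\ppi^n$, as required.

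Once this congruence is in hand, the remainder is formally identical to Claim \ref{limit of A_n}: for $n \geq N_\rr$, uniform continuity \eqref{uni conti of F} yields $F(\varphi^{(j+n-1)}(\beta_n(\varphi(\xx)))) \equiv F(\varphi^{(j+n)}(\alpha_n(\xx))) \pmod{\tt^{\rr+\delta_2}}$, and dividing by $F(\varphi^{(j+n)}(\alpha_n(\xx)))$ (whose $\tt$-adic valuation is bounded by $\delta_2$ via \eqref{uni bounded of F}) gives $b_{j,n}(\xx) \equiv 1 \pmod{\tt^\rr}$. The main obstacle I anticipate is the bookkeeping in the digit comparison: because the two periods $2n-1$ and $2n-2$ differ by one, the wrap-around positions do not coincide across the two sides, and one must verify that the digits nevertheless remain paired up through the first full cycle before the first potential mismatch at position $2n-2-j$. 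Once this alignment is carefully checked, the rest of the argument is mechanical.
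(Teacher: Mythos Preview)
Your proposal is correct and follows essentially the same route as the paper: both reduce to the congruence $\varphi^{(n-1+j)}(\beta_n(\varphi(\xx))) \equiv \varphi^{(n+j)}(\alpha_n(\xx)) \pmod{\ppi^n}$ for $0 \le j \le n-2$, then finish exactly as in Claim~\ref{limit of A_n}. The only cosmetic difference is that the paper establishes the single base congruence $\varphi^{(n)}(\alpha_n(\xx)) \equiv \varphi^{(n-1)}(\beta_n(\varphi(\xx))) \pmod{\ppi^{2n-2}}$ and then iterates $\varphi$ via \eqref{uni conti of phi}, whereas you plan to read off each $j$ directly from the periodic digit expansions; both yield the same $\ppi^{2n-2-j}$ bound.
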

	
	\begin{proof}
		Note that
		\[\varphi^{(n)}\left(\alpha_n(\xx)\right)\equiv \varphi^{(n-1)}\left(\beta_n\left(\varphi(\xx)\right)\right)\pmod{\ppi^{2n-2}}. 
		\]
		Thus by \eqref{uni conti of phi}, for $i=0,1,\ldots,n-2$, we have
		\[\varphi^{(n+i)}\left(\alpha_n(\xx)\right)\equiv \varphi^{(n-1+i)}\left(\beta_n\left(\varphi(\xx)\right)\right)\pmod{\ppi^{n}}.\]
		The result now follows from a similar argument as in Claim \ref{limit of A_n}.
	\end{proof}
	
	\begin{claim}   \label{limit of G_n}
		$\lim\limits_{n\to\infty}G_n(\xx)=G(\xx)$ where $G$ is a continuous non-vanishing function on $\ovl{\Ocal}$.
	\end{claim}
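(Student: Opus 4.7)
The plan is to prove that $(G_n)$ is uniformly Cauchy on the compact space $\ovl{\Ocal}$, so that it converges to a continuous limit $G$, and then to verify $G$ is non-vanishing. Both assertions will follow once we control the multiplicative ratio $G_{n+1}(\xx)/G_n(\xx)$. Specifically, a short reindexing of $\prod_{i=0}^{n-1} F(\varphi^{(i)}(\beta_{n+1}(\xx)))$ via $i \mapsto i+1$ together with splitting off the $i = 0$ factor yields
\[
\frac{G_{n+1}(\xx)}{G_n(\xx)}
= \frac{1}{F(\beta_{n+1}(\xx))}\prod_{i=0}^{n-2}\frac{F(\varphi^{(i)}(\beta_n(\xx)))}{F(\varphi^{(i+1)}(\beta_{n+1}(\xx)))},
\]
which we aim to show is $\equiv 1 \pmod{\tt^r}$ for all large $n$, uniformly in $\xx$.

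For the product on the right, the first step is a digit-level comparison of $\beta_n$ and $\beta_{n+1}$ coming from \eqref{beta_n}: $\beta_n(\xx)$ has the periodic $\ppi$-adic digit string $(\bb,\ldots,\bb,\xx_{(0)},\ldots,\xx_{(n-2)})$ of length $2n-2$, while $\beta_{n+1}(\xx)$ has the analogous length-$2n$ string. For $0 \leq i \leq n-2$, both $\varphi^{(i)}(\beta_n(\xx))$ and $\varphi^{(i+1)}(\beta_{n+1}(\xx))$ begin with $n-1-i$ copies of $\bb$ followed by $\xx_{(0)}, \xx_{(1)}, \ldots$, and first disagree only at position $2n-2-i$. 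This yields
\[
\varphi^{(i)}(\beta_n(\xx)) \equiv \varphi^{(i+1)}(\beta_{n+1}(\xx)) \pmod{\ppi^{2n-2-i}}
\]
for $0 \leq i \leq n-2$, which is at least $\pmod{\ppi^n}$. Invoking the uniform continuity \eqref{uni conti of F} and uniform bound \eqref{uni bounded of F}, each factor in the displayed product is $\equiv 1 \pmod{\tt^r}$ for $n \geq N_r$, uniformly in $i$ and $\xx$, so the whole product is $\equiv 1 \pmod{\tt^r}$ by the ultrametric inequality.

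The conceptual heart of the argument, and the step I expect to be the main obstacle, is showing that the extra factor $F(\beta_{n+1}(\xx))^{-1}$ also tends to $1$. The first $n$ digits of $\beta_{n+1}(\xx)$ are all $\bb$, so $\beta_{n+1}(\xx) \equiv \bb/(\1-\ppi) \pmod{\ppi^n}$. The element $\bb/(\1-\ppi)$ has all $\ppi$-adic digits equal to $\bb$, hence is a $\varphi$-fixed point, and the choice \eqref{bt} of $\bb$ forces $|b_\idx/(1-\pi_\idx)| < 1$ for every $\idx$. Applying the hypothesis \eqref{new condition} with $n=1$ at $\bb/(\1-\ppi)$ therefore gives $F(\bb/(\1-\ppi)) = 1$, after which uniform continuity of $F$ yields $F(\beta_{n+1}(\xx))^{-1} \equiv 1 \pmod{\tt^r}$ for $n \geq N_r$. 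Combining with the previous paragraph produces the desired uniform congruence $G_{n+1}(\xx)/G_n(\xx) \equiv 1 \pmod{\tt^r}$ for $n \geq N_r$.

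To conclude, setting $r = 1$ makes $G_{n+1}/G_n$ a $\tt$-adic unit for $n \geq N_1$, so $\ord_\tt G_n(\xx) = \ord_\tt G_{N_1}(\xx)$ is independent of $n \geq N_1$; since $G_{N_1}$ is continuous and non-vanishing on the compact set $\ovl{\Ocal}$, this integer-valued function is uniformly bounded in $\xx$, giving a uniform upper bound on $|G_n(\xx)|$. The multiplicative Cauchy estimate then upgrades to an additive Cauchy estimate, so $(G_n)$ converges uniformly to a continuous limit $G:\ovl{\Ocal} \to K$. Finally, $\ord_\tt G(\xx) = \ord_\tt G_{N_1}(\xx) < \infty$ shows $G$ is non-vanishing, completing the argument.
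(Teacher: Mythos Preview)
Your argument is correct and follows essentially the same route as the paper: both control the ratio $G_{n+1}/G_n$ by splitting off the factor $F(\beta_{n+1}(\xx))$, using the fixed-point identity $F(\bb/(\1-\ppi))=1$ from \eqref{new condition} to handle it, and using the congruence $\varphi(\beta_{n+1}(\xx))\equiv\beta_n(\xx)\pmod{\ppi^{2n-2}}$ together with \eqref{uni conti of F} and \eqref{uni bounded of F} to handle the remaining product. The only notable difference is in establishing uniform boundedness of $\ord_\tt G_n$: the paper proves this directly at the outset (showing all but $N-1$ factors of $G_n$ are $\tt$-units), whereas you deduce it a posteriori from the $r=1$ case of the multiplicative Cauchy estimate, which freezes $\ord_\tt G_n(\xx)$ at $\ord_\tt G_{N_1}(\xx)$ for $n\geq N_1$; your route is slightly more economical but logically equivalent.
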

	
	\begin{proof}
		We first show that $\{G_n\}_{n=1}^\infty$ is uniformly bounded away from zero. That is, there exist integers $M_1, M_2$ such that for all $n\in\bbN$ and all $\xx\in\ovl{\Ocal}$,
		\[M_1\leq \ord_\tt G_n(\xx)\leq M_2.\]
		Note that by \eqref{bt},
		$$\left|\left(\frac{\bb}{\1-\ppi}\right)_\idx\right| = \left|\frac{b_\idx}{1-\pi_\idx}\right|<1$$
		for each $\idx$, and
		\[\varphi\left(\frac{\bb}{\1-\ppi}\right)=\frac{\bb}{\1-\ppi}. \]
		Thus by \eqref{new condition}, 
		\[F\left(\frac{\bb}{\1-\ppi}\right)=1.\]
		By continuity of $F$, there exists a positive integer $N$ such that for all $\yy\in\ovl{\Ocal}$ with
		\[\yy\equiv \frac{\bb}{\1-\ppi}\pmod{\ppi^{N}},\]  
		we have
		\[F(\yy)\equiv F\left(\frac{\bb}{\1-\ppi}\right)=1\pmod{\tt}. \]
		We note that in this case, $\ord_\tt F(\yy)=0$.
		
		Now, notice that 
		\[\beta_n(\xx)\equiv \frac{\bb}{\1-\ppi}\pmod{\ppi^{n-1}}.\]
		Hence by \eqref{uni conti of phi}, for $i = 0,1,\ldots,n-N-1$,
		\[\varphi^{(i)}\left(\beta_n(\xx)\right)
		\equiv \varphi^{(i)}\left(\frac{\bb}{\1-\ppi}\right)
		= \frac{\bb}{\1-\ppi} \pmod{\ppi^{N}},\]
		which implies that for $i = 0,1,\ldots,n-N-1$,
		\[\ord_\tt F\left(\varphi^{(i)}\left(\beta_n(\xx)\right)\right)=0.\]
		Therefore,
		\[\ord_\tt G_n(\xx)=\ord_\tt\left[\prod_{i=n-N}^{n-2}F\left(\varphi^{(i)}\left(\beta_n(\xx)\right)\right)\right]^{-1}.\]
		From \eqref{uni bounded of F} we know that for all $\xx\in\ovl{\Ocal}$,
		\[\delta_1\leq\ord_\tt F(\xx)\leq \delta_2,\]
		which implies that
		\[-(N-1)\delta_2\leq \ord_\tt G_n(\xx)\leq -(N-1)\delta_1\]
		for all $\xx\in\ovl{\Ocal}$.
		This shows that $\{G_n\}_{n=1}^\infty$ is uniformly bounded away from zero.
		
		Next, we show that $\{G_n\}_{n=1}^\infty$ converges uniformly on $\ovl{\Ocal}$.
		By the completeness of $K$ together with non-Archimedean property, it suffices to show that $\{G_n-G_{n+1}\}_{n=1}^\infty$ converges uniformly to $0$ on $\ovl{\Ocal}$.
		Since $G_{n+1}$ is non-vanishing, we have
		\[G_n-G_{n+1}=G_{n+1}\left(\frac{G_n}{G_{n+1}}-1\right).\]
		And since $G_{n+1}$ is uniformly bounded, it is reduced to showing that given $\rr>0$, there exists a positive integer $N'$ such that for all $n\geq N'$, 
		\[\frac{G_n(\xx)}{G_{n+1}(\xx)}\equiv 1\pmod{\tt^\rr}\]
		for all $\xx\in\ovl{\Ocal}$.
		
		From the definition of $G_n(\xx)$ (see \eqref{def of G_n}), we have
		\[\frac{G_{n}(\xx)}{G_{n+1}(\xx)}=\frac{\prod_{i=0}^{n-1}F\left(\varphi^{(i)}\left(\beta_{n+1}(\xx)\right)\right)}{\prod_{i=0}^{n-2}F\left(\varphi^{(i)}\left(\beta_n(\xx)\right)\right)}=F\left(\beta_{n+1}(\xx)\right)\frac{\prod_{i=0}^{n-2}F\left(\varphi^{(i+1)}\left(\beta_{n+1}(\xx)\right)\right)}{\prod_{i=0}^{n-2}F\left(\varphi^{(i)}\left(\beta_n(\xx)\right)\right)}.\]
		Recall that by \eqref{new condition}, we have $$F\left(\frac{\bb}{\1-\ppi}\right)=1.$$ We have also seen that
		\[\beta_{n+1}(\xx)\equiv \frac{\bb}{\1-\ppi}\pmod{\ppi^{n}}.\]
		By the continuity of $F$, there is a positive integer $N''$ such that for $n\geq N''$, we have
		\[F\left(\beta_{n+1}(\xx)\right)\equiv 1\pmod{\tt^\rr}.\]
		On the other hand, note that 
		\[\varphi\left(\beta_{n+1}(\xx)\right)\equiv\beta_{n}(\xx)\pmod{\ppi^{2n-2}}.\]
		Thus by \eqref{uni conti of phi}, for $i=0,1,\ldots,n-2$, we have
		\[\varphi^{(i+1)}\left(\beta_{n+1}(\xx)\right)\equiv\varphi^{(i)}\left(\beta_{n}(\xx)\right)\pmod{\ppi^{n}}. \]
		So by \eqref{uni conti of F}, for all $n\geq N_\rr$ and all $i=0,1,\ldots,n-2$,
		\[F\left(\varphi^{(i+1)}\left(\beta_{n+1}(\xx)\right)\right)\equiv F\left(\varphi^{(i)}\left(\beta_{n}(\xx)\right)\right)\pmod{\tt^{\rr+\delta_2}}.\]
		Since from \eqref{uni bounded of F}, $\ord_\tt F(\xx)\leq \delta_2$ for all $\xx\in\ovl{\Ocal}$, this implies that for all $n\geq N_\rr$ and all $i=0,1,\ldots,n-2$,
		\[\frac{F\left(\varphi^{(i+1)}\left(\beta_{n+1}(\xx)\right)\right)}{F\left(\varphi^{(i)}\left(\beta_{n}(\xx)\right)\right)}\equiv 1\pmod{\tt^\rr}.\]
		Therefore, for $n\geq \max\{N'',N_\rr\}$, 
		\[\frac{G_n(\xx)}{G_{n+1}(\xx)}\equiv 1\pmod{\tt^\rr}\]
		for all $\xx\in\ovl{\Ocal}$.
		Hence, the desired property is proved.
		
		Finally, one sees from \eqref{def of G_n} that each $G_n$ is continuous, so $\{G_n\}_{n=1}^\infty$ converges to a continuous function $G$. Moreover, since $\{G_n\}_{n=1}^\infty$ is uniformly bounded away from zero, the limit $G$ is non-vanishing.
	\end{proof}
	
	\section{Generalization}
	
	Now, assume $\ovl{\Ocal}$ is a direct product of finitely many rings $\ovl{\Ocal}_\idx$, where each $\ovl{\Ocal}_\idx$ is the ring of integers of some non-Archimedean local field with a fixed uniformizer.
	For each $\idx$, we let $\pi_\idx \in \ovl{\Ocal}_\idx$ be a power of the given uniformizer and $S_\idx \sbe \ovl{\Ocal}_\idx$ be a complete set of representatives of the quotient ring $\ovl{\Ocal}_\idx/(\pi_\idx)$.
	For $\xx = (\xx_t)_t \in\ovl{\Ocal}$, we write
	$$
	\xx_\idx:=\sum_{i=0}^\infty x_{\idx,i}\pi_\idx^i \in \ovl{\Ocal}_\idx,
	$$
	where $x_{\idx,i}\in S_\idx$ for each $\idx$ and $i$.
	We similarly define
	$\varphi:\ovl{\Ocal}\to\ovl{\Ocal}$ by $\varphi(\xx)=(\varphi_\idx(\xx_\idx))_\idx$ where each $\varphi_\idx:\ovl{\Ocal}_\idx\to\ovl{\Ocal}_\idx$ is given by
	$$\varphi_\idx(\xx_\idx):=\sum_{i=0}^\infty x_{\idx,i+1}\pi_\idx^i.$$
	Then Theorem \ref{main result} can be extended to the following.
	
	\begin{thm}
		Given any complete non-Archimedean field $K$, let $\GGamma:\ovl{\Ocal}\to K$ be a continuous non-vanishing function. Then $H:\ovl{\Ocal}\to K$ is a continuous non-vanishing function satisfying for all $n\in\bbN$ and $\xx\in\ovl{\Ocal}$ with $\varphi^{(n)}(-\xx)=-\xx$,
		\[\prod_{j=0}^{n-1} \GGamma\left(-\varphi^{(j)}(-{\xx})\right)
		= \prod_{j=0}^{n-1} H\left(-\varphi^{(j)}(-{\xx})\right)\]
		if and only if
		\[H(\xx)=\GGamma(\xx)\cdot\frac{G(\xx)}{G\left(-\varphi(-\xx)\right)}\]
		on $\ovl{\Ocal}$, where $G:\ovl{\Ocal}\to K$ is any continuous non-vanishing function.
	\end{thm}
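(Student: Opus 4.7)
The proof plan is to adapt the argument of Theorem \ref{main result} essentially verbatim. The generalization replaces the concrete dichotomy $\Ocal_\idx \in \{\ZZ, A\}$ by an abstract ring-of-integers-of-local-field structure, and it drops the membership condition \eqref{con} in favor of the intrinsic formulation $\varphi^{(n)}(-\xx) = -\xx$. These are precisely the aspects that Section~2 already handles once one passes through the reformulation \eqref{reformulation of GKT}, so the same strategy transports with only cosmetic bookkeeping.

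For the ``if'' direction, I would repeat the telescoping argument of Section \ref{if} verbatim. Given a continuous non-vanishing $G : \ovl{\Ocal} \to K$, the function $H(\xx) := \GGamma(\xx) \cdot G(\xx)/G(-\varphi(-\xx))$ is clearly continuous and non-vanishing, and whenever $\varphi^{(n)}(-\xx) = -\xx$ one has $-\varphi^{(n)}(-\xx) = \xx$, so the product $\prod_{j=0}^{n-1} G(-\varphi^{(j)}(-\xx))/G(-\varphi^{(j+1)}(-\xx))$ telescopes to $1$ as in \eqref{product-of-G}, which yields the required identity.

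For the ``only if'' direction, set $F(\xx) := H(\xx)/\GGamma(\xx)$. Substituting $\xx \mapsto -\xx$ reduces the claim to the analog of Theorem \ref{change of variables}: a continuous non-vanishing $F$ with $\prod_{j=0}^{n-1} F(\varphi^{(j)}(\xx)) = 1$ for every $\xx$ satisfying $|\xx_\idx| < 1$ for all $\idx$ and $\varphi^{(n)}(\xx) = \xx$ must be of the form $G(\xx)/G(\varphi(\xx))$. I would define the auxiliary sequences $\alpha_n, \beta_n$ as in \eqref{alpha_n}, \eqref{beta_n} using the chosen digit sets $S_\idx$; the fixed-point identities \eqref{properties of alpha beta} still follow from the shift-of-digits description of $\varphi$, and the algebraic factorization \eqref{FABG} is purely formal. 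The four statements of Lemma \ref{all limits} then transfer, relying only on the uniform continuity of $F$ on the compact product $\ovl{\Ocal}$, on the uniform bounds \eqref{uni bounded of F} and \eqref{uni conti of F}, and on the existence of a suitable fixed point of $\varphi$ in the set $\{\xx : |\xx_\idx| < 1 \text{ for all } \idx\}$ to which the hypothesis applies.

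The main obstacle is precisely the choice of $\bb$: in the original proof the explicit inequalities \eqref{bt} are tailored to $\ZZ$ and $A$, but in the abstract setting no analogous numerical bound is available. A uniformly safe solution is to take $\bb := \mathbf{0}$, assuming (as may always be arranged) that $0 \in S_\idx$ for every $\idx$. Then $\bb/(\1 - \ppi) = \mathbf{0}$, whose $\pi_\idx$-adic digits are all zero, so it is a fixed point of $\varphi$ lying in the domain of the hypothesis, and applying \eqref{new condition} at $n = 1$ gives $F(\mathbf{0}) = 1$. With this choice the expressions $\alpha_n(\xx)$ and $\beta_n(\xx)$ have $\pi_\idx$-adic expansions beginning with $n-1$ zero digits, so $|\alpha_n(\xx)_\idx|, |\beta_n(\xx)_\idx| < 1$, which is exactly what is needed to feed them into the hypothesis in the definitions of $A_n, B_n, G_n$. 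The remaining arguments, namely that $G_n$ is uniformly bounded away from zero and that $\{G_n\}$ is uniformly Cauchy, go through using the non-Archimedean completeness of $K$ exactly as in Claims \ref{limit of A_n} through \ref{limit of G_n}. Reverting the substitution $\xx \mapsto -\xx$ then yields the desired representation of $H$.
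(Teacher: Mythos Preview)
Your approach is essentially the paper's, but you create an unnecessary complication by carrying the constraint $|\xx_\idx|<1$ over from Theorem~\ref{change of variables}. In the generalized statement the hypothesis on $H$ (hence on $F$ after the substitution $\xx\mapsto -\xx$) is imposed for \emph{every} $\xx$ with $\varphi^{(n)}(\xx)=\xx$, with no absolute-value restriction. The paper exploits this directly: since the constraint is absent, one may pick any $b_\idx\in S_\idx$, and the ``obstacle'' you identify never arises (this is exactly the paper's remark that ``in Claim~\ref{limit of G_n} there is no condition on absolute values in the current situation'').

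Your workaround, taking $\bb=\mathbf{0}$ ``as may always be arranged'', has a small gap: the digit sets $S_\idx$ are part of the data defining $\varphi$, so you cannot replace them without changing the very statement being proved, and nothing guarantees $0\in S_\idx$. If you wish to keep your self-imposed constraint, the minimal repair is to let each $b_\idx$ be the unique element of $S_\idx$ representing the zero coset in $\ovl{\Ocal}_\idx/(\pi_\idx)$; then $b_\idx\in\pi_\idx\ovl{\Ocal}_\idx$, so $|b_\idx|<1$, and $\bb/(\1-\ppi)$, $\alpha_n(\xx)$, $\beta_n(\xx)$ all satisfy the absolute-value condition you need. With that correction your argument goes through and in fact yields a slight strengthening of the paper's ``only if'' direction, since it uses the hypothesis only on the smaller set $\{\xx:|\xx_\idx|<1\text{ for all }\idx\}$.
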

	\begin{proof}
		The ``if'' part follows immediately from the assumption that $\varphi^{(n)}(-\xx)=-\xx$ (cf. \eqref{product-of-G}).
		For the ``only if'' part, let $F(\xx) := H(\xx)/\GGamma(\xx)$.
		Then by changing variables (cf. Theorem \ref{thm-only-if} and \ref{change of variables}), it suffices to prove the following statement:
		Let $F:\ovl{\Ocal}\to K$ be a continuous non-vanishing function satisfying for all $n\in\bbN$ and $\xx\in\ovl{\Ocal}$,
		$$
		\varphi^{(n)}(\xx)=\xx \implies \prod_{j=0}^{n-1}F\left(\varphi^{(j)}(\xx)\right)=1.
		$$
		Then there exists a continuous non-vanishing function $G:\ovl{\Ocal}\to K$ such that
		$$F(\xx)=\frac{G(\xx)}{G(\varphi(\xx))}$$
		for all $\xx \in \ovl{\Ocal}$.
		
		Now, we choose any $b_\idx \in S_\idx$ for each $\idx$ and let $\bb:=(b_t)\in\ovl{\oo}$.
		Define $\alpha_n,\beta_n:\ovl{\oo}\to\ovl{\oo}$ as \eqref{alpha_n} and \eqref{beta_n}.
		Then the same algebraic manipulation provides the equality \eqref{FABG}:
		$$F\left(\varphi^{(n-1)}\left(\alpha_n(\xx)\right)\right)=A_n(\xx)\cdot B_n(\xx)\cdot \frac{G_n(\xx)}{G_n\left(\varphi(\xx)\right)}$$
		where $A_n,B_n,G_n$ are defined as \eqref{An Bn} and \eqref{def of G_n}.
		By almost the identical arguments, Lemma \ref{all limits} still holds.
		(The only difference is that, in Claim \ref{limit of G_n}, there is no condition on absolute values in the current situation.)
		Therefore, the result follows in a similar manner.
	\end{proof}
	
	\subsection*{Acknowledgments}
	
	The authors express their sincere gratitude to Chieh-Yu Chang for suggesting this research topic and for facilitating this collaboration.
	They also greatly appreciate his careful reading of the paper and insightful comments, which have significantly contributed to the development of this work.
	Furthermore, the authors extend their special thanks to the National Science and Technology Council for its financial support over the past few years.
	
	\printbibliography
	
\end{document}